\pgfplotsset{compat=1.11}
\newtheorem{theorem}{Theorem}[section]
\newtheorem{proposition}[theorem]{Proposition}
\newtheorem{proposition*}{Proposition}
\newtheorem{lemma}[theorem]{Lemma}
\newtheorem{corollary}[theorem]{Corollary}
\DeclareMathOperator{\Si}{\Sigma}
\DeclareMathOperator{\Area}{Area}
\DeclareMathOperator{\diam}{diam}
\DeclareMathOperator{\genus}{genus}
\newcommand{\R}{\mathbb{R}}
\newcommand{\RP}{\mathbb{RP}}
\newcommand{\eps}{\varepsilon}
\theoremstyle{definition}
\newtheorem{definition}[theorem]{Definition}
\newtheorem{remark}[theorem]{Remark}
\title[Waist inequality for 3-manifolds]{Waist inequality for 3-manifolds \\with
 positive scalar curvature}
\author{Yevgeny Liokumovich}
\author{Davi Maximo}
\begin{document}
	
	\maketitle
	
	\begin{abstract}
We construct singular foliations of compact three-manifolds $(M^3,h)$ with scalar curvature $R_h\geq \Lambda_0>0$ by surfaces of controlled area, diameter
and genus. This extends Urysohn and waist inequalities
of Gromov-Lawson and Marques-Neves.
	\end{abstract}
	
\section{Introduction}\label{sec:Introduction}
Let $(M^3,h)$ be a compact three-manifold with 
%Heegaard genus $g$ and 
positive scalar curvature $R_h\geq \Lambda_0>0$. We will show that $M$ admits a singular foliation by surfaces of controlled size.

\begin{theorem}\label{thmmaincompact}
There exists a Morse function $f:M \rightarrow \R$,
such that for every $x \in \R$ and each connected
component $\Sigma$ of $f^{-1}(x)$ we have 
\begin{itemize}
    \item[(a)] $\Area(\Si) \leq  \frac{112 \pi}{\Lambda_0}$ if $M$ is non-orientable and $\Area(\Si) \leq  \frac{96 \pi}{\Lambda_0}$ if $M$ is orientable;
    \item[(b)] $\diam(\Si) \leq   \sqrt{\frac{2}{3}} \frac{26 \pi}{\sqrt{\Lambda_0}}$;
%	\item[(c)] $\genus(\Sigma) \leq 7 g$.
	\item[(c)] $\genus(\Sigma) \leq 2$.
\end{itemize}
Additionally, if $M$ is homeomorphic to $S^3$ or connected sum of $S^1 \times S^2$'s
then we can assume $\genus(\Sigma) =0$.
\end{theorem}

%In particular, when $M \cong S^3$ we have that $f^{-1}(x)$ is a disjoint union of 2-spheres of controlled area and diameter.
%Better constants can be obtained for foliations parametrized by a graph.

%\begin{theorem} \label{thmmaincompact2}
%There exists a continuous map $g: M \rightarrow G$ into 
%a graph $G$, such that for every $x \in G$
%\begin{itemize}
%    \item[(a)] $\Area(g^{-1}(x)) \leq  \frac{84 \pi}{\Lambda_0}$ if  is non-orientable and $\Area(\Si) \leq  \frac{72 \pi}{\Lambda_0}$ if $M$ is orientable;
%    \item[(b)] $\diam(g^{-1}(x)) \leq   \sqrt{\frac{2}{3}} \frac{26 \pi}{\sqrt{\Lambda_0}}$;
%	\item[(c)] $\genus(\Sigma) \leq 7 g$.
%	\item[(c)] $\genus(g^{-1}(x)) \leq 2$.
%\end{itemize}
%\end{theorem}

For the area and genus only we obtain nearly 
sharp bounds in Theorem \ref{sharp_area}.
The diameter $\diam(\Si)$ is the extrinsic diameter of $\Si$:
$$\diam(\Si) = \sup \{ dist_{M}(x,y)| x,y \in \Si \}$$
We expect that a similar result should hold
for foliations with controlled intrinsic diameter.

%Theorem resolves the 3-dimensional compact case of a waist inequality conjectured by
%Gromov (\cite[Conjecture $A_{++}$]{Gr2018}).

Existence of a map into a graph with 
the diameter of fibers bounded 
by $\frac{const}{\sqrt{\Lambda_0}}$
%$\frac{4 \pi \sqrt{6}}{\sqrt{\Lambda_0}}$
%$\frac{2 \pi \sqrt{6}}{\sqrt{\Lambda_0}}$ 
was proved
in \cite[Corollary 10.11]{GrLa83} and \cite{Gr2020} (see also \cite{ChLi20}). For manifolds with $Ric_g>0$ and scalar curvature $R_g>2$
Marques and Neves proved a sharp 
bound on the area of maximal fiber in an optimal Morse foliation
of $M^3$ \cite{MaNe12}. 
In \cite{Gr2018} Gromov suggested that by combining methods of 
\cite{GrLa83} and \cite{MaNe12} it should be possible
to obtain a foliation with control on diameter and area
of connected components of the fibers. We show that this is indeed the case.

Note that it is not possible to obtain a version of Theorem
\ref{thmmaincompact} with area or diameter of the connected component of 
%$\Si_t$
$f^{-1}(t)$
replaced by area or diameter of the whole fiber $f^{-1}(t)$.
%$\Si_t$.
Indeed, one can construct manifolds that are hard to cut by considering Gromov-Lawson connect sums of $3$-spheres corresponding to large trees or expander graphs
(see \cite{Mo15}, \cite{PaSw15}).

Combining Theorem \ref{thmmaincompact}
with arguments from \cite{LiZh18} we obtain
a foliation of $M$ by 1-cycles of controlled length.

\begin{theorem} \label{thm: 1-sweepout}
Let $(M^3,g)$ be a closed three-manifold with
 positive scalar curvature $R_g\geq \Lambda_0>0$.
There exists a map $f: M \rightarrow \R^2$, such that 
$$\text{\rm length}(f^{-1}(x)) \leq  \frac{4000}{\sqrt{\Lambda_0}}$$
for all $x \in \R^2$.
\end{theorem}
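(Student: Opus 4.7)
The plan is to combine the foliation produced by Theorem \ref{thmmaincompact} with a fiberwise one-dimensional sweepout of its surface fibers, following \cite{LiZh18}. First I apply Theorem \ref{thmmaincompact} to obtain a Morse function $f_1 \colon M \to \R$ whose every regular level component $\Sigma$ has area, diameter, and genus bounded as in (a)--(c).

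For each such $\Sigma$, I then invoke a surface sweepout theorem in the spirit of Balacheff--Sabourau (as adapted in \cite{LiZh18}) to produce a Morse function $\psi_\Sigma \colon \Sigma \to \R$ whose fiber components are 1-cycles of length at most a universal constant times $\sqrt{g(\Sigma)+1}\cdot \diam(\Sigma)$ (or $\sqrt{g(\Sigma)\cdot \Area(\Sigma)}$, depending on which form is used). Combined with the quantitative bounds from Theorem \ref{thmmaincompact}, this yields $\mathrm{length}(\psi_\Sigma^{-1}(s)) \leq C\sqrt{g}/\sqrt{\Lambda_0}$ for every $s \in \R$ and every component $\Sigma$.

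The third step is to assemble the collection $\{\psi_\Sigma\}$ into a single continuous map $f_2 \colon M \to \R$ (or $S^1$, which can then be unwrapped) and set $f := (f_1, f_2)$. On the open subset where $f_1$ is regular, neighboring fibers are diffeomorphic and one can arrange the surface sweepouts to depend continuously on the parameter $t$ by a parametric version of the construction from the previous paragraph. Near each critical value of $f_1$ the fibers change topology by a handle attachment localized in a small coordinate ball, and one interpolates $f_2$ across this ball; this introduces only extra fiber arcs contained in the ball, whose length is controlled by the ball's size. By construction $f^{-1}(t,s)$ is then contained in $\bigsqcup_\Sigma \psi_\Sigma^{-1}(s)$ where the union is over components $\Sigma \subset f_1^{-1}(t)$, giving the desired length estimate (absorbing the number of components into the constant $C$, which is controlled by the number of critical points of $f_1$ and hence by $g$).

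The main obstacle I anticipate is this assembly step: making the fiberwise sweepouts genuinely continuous in $t$ across the critical values of $f_1$, without destroying the length bound. This requires a parametric version of the surface sweepout together with an explicit local model for interpolating across each Morse surgery, both of which are the technical heart of \cite{LiZh18}. Given these ingredients essentially as a black box, the remainder of the proof is a direct verification once Theorem \ref{thmmaincompact} is in place.
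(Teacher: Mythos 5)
Your overall strategy (combine Theorem \ref{thmmaincompact} with the one-dimensional sweepouts of bounded-genus, bounded-area surfaces from \cite{LiZh18}) is the same as the paper's, but the final assembly step contains a genuine gap. If you set $f=(f_1,f_2)$ with $f_1$ the Morse function itself, then the fiber $f^{-1}(t,s)$ contains the 1-cycles $\psi_\Sigma^{-1}(s)$ for \emph{every} connected component $\Sigma$ of $f_1^{-1}(t)$ simultaneously, and you propose to absorb the number of components into $C$, claiming it is controlled by the number of critical points of $f_1$ and hence by $g$. That claim is false: neither the number of critical points of $f_1$ nor the number of components of its level sets is bounded in terms of $g$ and $\Lambda_0$. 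The paper's own discussion in the introduction makes this point — Gromov--Lawson connected sums of $3$-spheres along large trees or expanders are diffeomorphic to $S^3$ (Heegaard genus $0$), yet any admissible Morse function has level sets with arbitrarily many components, so the total length of the whole fiber cannot be bounded by $C\sqrt{g}/\sqrt{\Lambda_0}$ with a universal $C$. With your construction the theorem's conclusion would therefore fail on these examples.

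This is exactly why the paper does not use $f_1$ as a coordinate of the map to $\R^2$. Instead it passes to the Reeb graph $G=M/\sim$ of $f_1$ (collapsing each connected component of a fiber to a point), takes a general position linear immersion of $G$ into $\R^2$, and for each edge $E$ maps the region $\overline{f}^{-1}(E)$ into a thin rectangle $R_E$ around $E$ using Theorem 4.2 of \cite{LiZh18}, so that fibers over points of $R_E$ are closed curves of length at most $C\sqrt{g}/\sqrt{\Lambda_0}$; Proposition 4.3 of \cite{LiZh18} is used to prescribe the boundary sweepouts on the two ends of each rectangle (including a point curve at the critical point of $f_1$) so that the maps glue continuously across the vertices. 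Because distinct components of the same level correspond to distinct edges of $G$, which are sent to essentially disjoint rectangles (general position allows only finitely many transverse crossings, each involving two edges), a point of $\R^2$ meets at most a bounded number of the controlled 1-cycles, which is what rescues the universal constant. Your parametric-continuity concerns are legitimately handled by the same citations you invoke, but without the Reeb-graph step the length bound cannot be salvaged.
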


By applying the min-max arguments (see \cite{Pit76}, \cite{NaRo04}) to the family of curves
constructed in the proof of Theorem \ref{thm: 1-sweepout} we obtain that $M$ contains a stationary 
geodesic net of length bounded
by $ \frac{4000}{\sqrt{\Lambda_0}}$ .

If we do not require that fibers have
controlled diameter, then we can get the following better bounds
for the genus and the area
using estimate of Marques and Neves
for the area of index $1$ minimal surface \cite{MaNe12} 
(cases (a) and (b)
were observed in \cite[3.13B]{Gr2019}).

\begin{theorem} \label{sharp_area}
Suppose $(M,h)$  has positive scalar curvature $R_h\geq \Lambda_0>0$. For every $\eps>0$ the following holds.

\begin{itemize}
    \item[(a)] If $M$ is homeomorphic to $S^3$
then there exists a Morse function $f: M \rightarrow \R$,
such that every non-singular fiber $f^{-1}(t)$ is a union of 
2-spheres of area at most $\frac{24 \pi}{\Lambda_0} + \eps$. 

\item[(b)]  If $M$ is homeomorphic to $S^2 \times S^1$, 
then there exists a Morse function $f: M  \rightarrow S^1$,
such that every non-singular fiber $f^{-1}(t)$ is a union of 
2-spheres of area at most $\frac{24 \pi}{\Lambda_0}+ \eps$. 

\item[(c)] More generally, there exists a graph $G$ and 
a continuous map $f: M \rightarrow G$, such that every fiber
$f^{-1}(t)$
has genus $\leq 2$ and area $\leq \frac{32 \pi}{\Lambda_0}+\eps$.
\end{itemize}
 
\end{theorem}

Our paper is organized as follows. In section 2, we use the min-max existence theory of minimal surfaces to decompose any closed three-manifold with positive scalar curvature into regions whose boundary components are minimal surfaces of controlled area, diameter and with
genus bounded by the Heegaard genus of the prime factors of
$M$, or, for non-oreitnable prime factors, minimal projective planes. In section 3, we recall some facts about mean curvature flow with surgeries and use them to propagate area bounds from the boundary of such region to a foliation parametrized by a graph. In the same section we prove Theorems \ref{sharp_area} and
\ref{thm: 1-sweepout}. Finally, in Section 4, we show how to modify such foliation by a cut-and-paste procedure so that we may guarantee, simultaneously,  controlled area, genus, and diameter; and show how the main result follows from this. 

%Finally, we note that Chodosh and Li in \cite{ChLi20} have recently constructed a method can be used to show that a compact 3-manifold with positive scalar curvature can be mapped to  graph by a map with pre-images of uniformly bounded diameter.

\vspace{0.1in}
\textbf{Acknowledgements} We would like to thank the Institute of Advanced Study and Fernando C. Marques for organizing the 2018-2019 Special Year on Variational Methods in Geometry, during which our collaboration started. We are grateful to Misha Gromov for suggesting Theorem \ref{sharp_area} to us and numerous other valuable suggestions. We are grateful to  Robert Haslhofer for useful conversations. We want to thank Zhichao Wang for pointing out  mistakes in an earlier version of this paper and for multiple valuable comments. Y.L. was supported by NSERC Discovery grant and NSERC Accelerator Award. D.M. was supported by NSF grant DMS-1737006, DMS-1910496, and a Sloan Fellowship.

\section{Minimal surfaces and decomposition of 3-manifolds}
\subsection{Area and diameter estimates.}
We summarize known area and diameter bounds for two-sided minimal surfaces on manifolds of uniformly positive scalar curvature $R_g\geq \Lambda_0$.

	 \begin{theorem}[Area Estimates]\label{area} 
      Let $(M^3,g)$ be a manifold with scalar curvature $R_g\geq \Lambda_0>0$ and suppose $\Sigma\subset M$ is a closed two-sided minimal surface. 
%      If $\Sigma$ is non-orientable let $\tilde{\Sigma}$ denote the double cover      of $\Sigma$.
      
      \begin{itemize}
      	\item[(a)] Suppose $\Sigma$ is stable. Then $\Sigma \cong S^2$ and $\Area(\Sigma) \leq \frac{8\pi}{\Lambda_0}$
      	or $\Sigma \cong \RP^2$ and $\Area(\Sigma) \leq \frac{4\pi}{\Lambda_0}$.
%      	If $\Sigma$ is non-orientable, then 
%      	$\Area(\Sigma) \leq \frac{12 \pi}{\Lambda_0}+\frac{4 \pi}{\Lambda_0}$
%      	\item[(b)] If $\Sigma$ is closed and has index 1, then $|\Sigma| \leq \frac{32\pi}{\Lambda_0}$.
    \item[(b)] If $\Sigma$ is orientable and has index 1, then $|\Sigma| \leq \frac{32\pi}{\Lambda_0}$.
    \item[(c)] If $\Sigma$ is a minimal 2-sphere of index 1, then 
    $|\Sigma| \leq \frac{24\pi}{\Lambda_0}$.
    \item[(d)] If $\Sigma \cong \RP^2$ and has index $1$,
    then $|\Sigma| \leq \frac{28\pi}{\Lambda_0}$.
      \end{itemize}
     \end{theorem} 
     
\begin{proof} For (a)-(c), see  \cite[Proposition A.1]{MaNe12}. For (d), consider Gauss' equation:
$$\textrm{Rc}(\nu,\nu)=\frac{R}{2}-K-\frac{|A|^2}{2}.$$
So that:
\begin{align*}
\frac{\Lambda}{2}|\Sigma| &\leq \int_\Sigma \frac{R}{2} = \int_\Sigma \textrm{Rc}(\nu,\nu)+\frac{|A|^2}{2}+K\\ 
&\leq \int_\Sigma K + \int_\Sigma \textrm{Rc}(\nu,\nu)+{|A|}^{2}\\
& = 2\pi \chi(\mathbb{RP}^2)+ \int_\Sigma \textrm{Rc}(\nu,\nu)+{|A|}^{2}.
\end{align*}

By Yau \cite{Yau87}, since $\Sigma$ is two-sided and has index 1: 
\begin{align*}
    2V_c(n,\Sigma)=2\inf_F \sup_{g} |g\circ F (\Sigma)| \ge \int_\Sigma \textrm{Rc}(\nu,\nu)+{|A|}^{2},
\end{align*}
where $F$ is conformal map from $\Sigma$ to $n$-sphere $S^n$ and $g$ is a conformal automorphism of $S^n$.

Finally, by Li-Yau \cite{LiYau82}: $\inf_n V_c(n,\Sigma) = V_c(\mathbb{RP}^2)=6\pi$. Thus: 
\begin{align*}
    |\Sigma| \leq \frac{28\pi}{\Lambda}.
\end{align*}
\end{proof}
     
We will also need a free boundary version of this estimate that can be found in Ambrozio \cite{Amb15}.

\begin{theorem} \label{area free boundary}
Let $(M^3,g)$ be a manifold with boundary and scalar curvature $R_g\geq \Lambda_0>0$. Suppose $D$ is a properly embedded free boundary stable minimal disk whose boundary lies on a mean convex boundary component of $\partial M$. Then, $\Area(D)\leq \frac{4\pi}{\sqrt{\Lambda_0}}$.
\end{theorem}

The following estimate follows from 
\cite{SY83}, \cite[Theorem 10.2]{GrLa83}, \cite{Gr2020}.

%We use the following filling radius estimate 
%from \cite[Theorem 10.2]{GrLa83} (the estimate also follows from \cite{SY79} and \cite{SY83}, as in \cite[Proposition 2.2]{LiZh18}).

%\begin{theorem}[Filling Radius Estimate]\label{filling}
%Let $(M^3,g)$ be a manifold with scalar curvature $R_g\geq \Lambda_0>0$ and $\Sigma\subset M$ a closed embedded two-sided stable minimal surface. Suppose $\Omega\subset \Sigma$ is a domain in $\Sigma$, and let $\rho>0$ be a number such that
%\begin{itemize}
%	\item[(a)] $\Omega(\rho) = \{x\in\Sigma:dist_\Sigma(x,\Omega) \leq \rho\}$ does not meet $\partial \Sigma$.
%	\item[(b)] $\rm{Image}[H_1(\Omega) \rightarrow H_1(\Omega(\rho))] \neq 0$. 
%\end{itemize} 
%Then, 
%$$\rho\leq \dfrac{\pi}{\sqrt{\Lambda_0}}.$$
%\end{theorem}

\begin{theorem}[Inradius Estimate]\label{diameter1}
Let $(M^3,g)$ be a manifold with scalar curvature $R_g\geq \Lambda_0>0$ and suppose $\Sigma$ is a two-sided embedded stable minimal surface of $M$ with boundary $\partial\Sigma $. Then for all $x \in \Sigma$ the intrinsic distance of $\Sigma$, $d_\Sigma$, satisfies:
\[d_\Sigma(x,\partial \Sigma) \leq \sqrt{\frac{2}{3}} \dfrac{2\pi}{\sqrt{\Lambda_0}} \]
\end{theorem}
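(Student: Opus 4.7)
The plan is to combine two second-variation inequalities along a distance-realizing geodesic in $\Sigma$ from $\partial\Sigma$: the ambient two-sided stability of $\Sigma$ in $M$ (which imports $R_g \geq \Lambda_0$ via the Gauss equation), and the intrinsic Jacobi-form inequality along the minimizing geodesic (which is purely 2D). The intrinsic Gauss curvature $K_\Sigma$ enters the two inequalities with opposite signs, so summing them cancels $K_\Sigma$ and reduces the whole problem to a 1D Sturm comparison, from which the constant $2\pi/\sqrt{\Lambda_0}$ drops out as the first Dirichlet eigenvalue on an interval of length $L$.

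Concretely, the two-sided stability of $\Sigma$ combined with the traced Gauss equation $K_\Sigma = R_g/2 - \mathrm{Ric}_g(\nu,\nu) - |A|^2/2$ and the hypothesis $R_g \geq \Lambda_0$ yields, for every Lipschitz $\phi$ vanishing on $\partial\Sigma$,
\[\int_\Sigma |\nabla \phi|^2 + K_\Sigma\, \phi^2 \,\geq\, \frac{\Lambda_0}{2} \int_\Sigma \phi^2.\]
Supposing for contradiction there is $p \in \Sigma$ with $L := d_\Sigma(p, \partial\Sigma) > 2\pi/\sqrt{\Lambda_0}$, fix a minimizing unit-speed geodesic $\gamma : [0, L] \to \Sigma$ from $\partial\Sigma$ to $p$. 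In Fermi coordinates $(s, \rho)$ on a thin tube around $\gamma$, I test the inequality above with $\phi(x) = \psi(s(x))\,\eta_\epsilon(\rho(x))$, where $\psi(0) = \psi(L) = 0$ and $\eta_\epsilon$ is an $L^2$-normalized bump at $\rho = 0$. Letting $\epsilon \to 0$ and integrating in $\rho$ first produces the 1D inequality
\[\int_0^L \bigl(\psi'(s)^2 + K_\Sigma(\gamma(s))\,\psi(s)^2\bigr)\, ds \,\geq\, \frac{\Lambda_0}{2}\int_0^L \psi(s)^2\, ds.\]
On the other hand, since $\gamma$ realizes $d_\Sigma(p, \partial\Sigma)$, the intrinsic second variation of arc length under normal variations vanishing at both endpoints is nonnegative, yielding
\[\int_0^L \bigl(\psi'(s)^2 - K_\Sigma(\gamma(s))\,\psi(s)^2\bigr)\, ds \,\geq\, 0\]
for the same class of test functions $\psi$. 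Adding the two inequalities cancels the $K_\Sigma$ term and gives $2\int_0^L \psi'^2\,ds \geq (\Lambda_0/2)\int_0^L \psi^2\,ds$; plugging in the Wirtinger extremizer $\psi(s) = \sin(\pi s/L)$ forces $\pi^2/L^2 \geq \Lambda_0/4$, i.e., $L \leq 2\pi/\sqrt{\Lambda_0}$, contradicting the assumption.

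The principal technical point is the Fermi-coordinate localization: one must verify that a tube of positive radius around $\gamma$ exists inside $\Sigma$ (using that $\gamma$ is distance-realizing and hence free of intrinsic cut points on $(0, L)$), and that the error terms from the non-product structure of the transverse metric vanish in the limit $\epsilon \to 0$, leaving a clean pointwise coefficient $K_\Sigma(\gamma(s))$. The mild mismatch between the coordinate line $\{s = 0\}$ and $\partial\Sigma$ near $\gamma(0)$---where $\gamma \perp \partial\Sigma$ only to first order---can be dealt with by replacing $s$ by the distance function $r(x) = d_\Sigma(x, \partial\Sigma)$ in the definition of $\phi$, at the cost of a gradient error absorbed by $|\nabla r| \leq 1$. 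Everything else is a symmetry trick: the two curvature integrals are built to cancel exactly, leaving a 1D Dirichlet eigenvalue problem on $[0, L]$ whose first eigenvalue furnishes precisely the sharp constant $2\pi/\sqrt{\Lambda_0}$.
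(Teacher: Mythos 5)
Your two ingredients are individually correct: two-sided stability plus the traced Gauss equation does give $\int_\Sigma |\nabla\phi|^2 + K_\Sigma\phi^2 \geq \tfrac{\Lambda_0}{2}\int_\Sigma\phi^2$ for $\phi$ vanishing on $\partial\Sigma$, and the index form of a minimizing geodesic does give $\int_0^L (\psi'^2 - K_\Sigma\psi^2)\,ds \geq 0$. The gap is the localization step that is supposed to turn the first (two-dimensional) inequality into the one-dimensional inequality $\int_0^L \bigl(\psi'^2 + K_\Sigma(\gamma(s))\psi^2\bigr)\,ds \geq \tfrac{\Lambda_0}{2}\int_0^L\psi^2\,ds$. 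With $\phi = \psi(s)\,\eta_\epsilon(\rho)$ and $\eta_\epsilon$ an $L^2$-normalized bump of width $\epsilon$, the Dirichlet energy contains the transverse term $\bigl(\int_0^L\psi^2\,ds\bigr)\bigl(\int\eta_\epsilon'^2\,d\rho\bigr)$, and by the one-dimensional Poincar\'e inequality any unit-$L^2$ bump supported in an interval of length $2\epsilon$ has $\int\eta_\epsilon'^2\,d\rho \gtrsim \epsilon^{-2}$; so as $\epsilon\to 0$ the stability inequality degenerates to $+\infty \geq \tfrac{\Lambda_0}{2}\int\psi^2$, which is vacuous, and the clean 1D inequality does not follow. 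This is not a coordinate or error-term issue that Fermi coordinates can fix: a curve carries a positive capacity cost inside a surface, and stability constrains $K_\Sigma$ only when integrated against genuinely two-dimensional test functions --- along a single geodesic $K_\Sigma$ can be very negative without contradicting stability, so the claimed intermediate inequality is simply not a consequence of the hypotheses. The cancellation-of-$K_\Sigma$ device therefore collapses exactly where the real work lies, and the constant $2\pi/\sqrt{\Lambda_0}$ appears only because the unjustified 1D inequality was engineered to produce it.

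For comparison, the paper does not reprove this statement: it quotes it from Schoen--Yau and Gromov--Lawson (cf.\ Proposition 2.2 of Liokumovich--Zhou). The known proofs supply precisely the mechanism you are missing for converting surface-wide stability into information along a distance-minimizing path: either use positivity of the stability operator to produce a positive (super)solution $u$ of $\Delta u + (|A|^2 + \mathrm{Ric}(\nu,\nu))u \leq 0$ and run a second-variation argument for the $u$-weighted length functional (equivalently, work in a conformally modified metric), as in Schoen--Yau; or test stability with radial functions $f\bigl(d_\Sigma(\cdot,\partial\Sigma)\bigr)$ --- your own suggested replacement of $s$ by $r$ --- and then control the resulting global term $\int_\Sigma K_\Sigma f(r)^2$ via the coarea formula and Gauss--Bonnet applied to the superlevel sets of $r$, not via cancellation along one geodesic. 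Without one of these devices the argument as written does not close.
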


\begin{corollary}[Diameter Estimate]\label{diameter2}
	Let $(M^3,g)$ be a manifold with scalar curvature $R_g\geq \Lambda_0>0$. Suppose $\Sigma$ is a closed embedded two-sided minimal surface on $M$.
	\begin{itemize}
		\item[(a)] If $\Sigma$ is stable, then $\rm{diam}_\Sigma \leq \sqrt{\frac{2}{3}} \frac{2\pi}{\sqrt{\Lambda_0}} $.
		\item[(b)] If $\Sigma$ has index 1, then $\rm{diam}_\Sigma \leq \sqrt{\frac{2}{3}} \frac{4\pi}{\sqrt{\Lambda_0}} $.
	\end{itemize}
\end{corollary}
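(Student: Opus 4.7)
The plan is to reduce both parts to the inradius estimate of Theorem~\ref{diameter1}: for (a) by puncturing the closed surface, and for (b) by a two-ball / index-reduction argument.

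For (a), I would fix any $p\in\Sigma$ and puncture $\Sigma$ at $p$. For small $\epsilon>0$, form the subdomain $\Sigma_\epsilon := \Sigma\setminus B_p(\epsilon)$, where $B_p(\epsilon)$ is the intrinsic geodesic disk of radius $\epsilon$. Since stability is inherited by compactly supported subdomains, $\Sigma_\epsilon$ is a two-sided stable minimal surface with boundary $\partial B_p(\epsilon)$, and Theorem~\ref{diameter1} yields $d_{\Sigma_\epsilon}(x,\partial B_p(\epsilon)) \le \tfrac{2\pi}{\sqrt{\Lambda_0}}$ for every $x\in\Sigma_\epsilon$. Letting $\epsilon\to 0$ gives $d_\Sigma(x,p)\le \tfrac{2\pi}{\sqrt{\Lambda_0}}$; since $x$ and $p$ are arbitrary this is exactly (a).

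For (b) I would argue by contradiction. Suppose $\diam_\Sigma > \tfrac{4\pi}{\sqrt{\Lambda_0}}$, pick $p,q\in\Sigma$ with $d_\Sigma(p,q)>\tfrac{4\pi}{\sqrt{\Lambda_0}}$, and choose $R\in\bigl(\tfrac{2\pi}{\sqrt{\Lambda_0}},\tfrac12 d_\Sigma(p,q)\bigr)$, so that the intrinsic balls $B_p(R)$ and $B_q(R)$ are disjoint. On each ball I would place the radial test function used in the proof of Theorem~\ref{diameter1}, namely $\phi_p(\cdot)=F(R-d_\Sigma(\cdot,p))$ extended by zero to $\Sigma$, and $\phi_q$ analogously, so that $\phi_p$ and $\phi_q$ have disjoint supports. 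Because the supports are disjoint, the Jacobi quadratic form
\[
Q(\phi)=\int_\Sigma\bigl(|\nabla\phi|^2-(|A|^2+\mathrm{Ric}(\nu,\nu))\phi^2\bigr)
\]
is diagonal on $\mathrm{span}(\phi_p,\phi_q)$: $Q(a\phi_p+b\phi_q)=a^2 Q(\phi_p)+b^2 Q(\phi_q)$. If both $Q(\phi_p)<0$ and $Q(\phi_q)<0$ were to hold, then $Q$ would be negative-definite on this two-dimensional subspace, contradicting the hypothesis that $\Sigma$ has index $1$. Hence at least one of $\phi_p,\phi_q$ satisfies the stability inequality on $\Sigma$, and I would rerun the Schoen--Yau computation of Theorem~\ref{diameter1} with that single test function to obtain $R\le \tfrac{2\pi}{\sqrt{\Lambda_0}}$, contradicting the choice of $R$.

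The step I expect to require the most care is the last one: the Schoen--Yau inradius argument needs to be re-examined to confirm that it only uses the stability inequality evaluated at the specific radial test function $\phi_p$, rather than full stability of a subdomain. This is indeed the case, since the proof is an integral inequality applied to the chosen radial function, so individual stability of $\phi_p$ is enough to conclude $R\le \tfrac{2\pi}{\sqrt{\Lambda_0}}$. Once that point is verified, the index-reduction by disjoint supports completes (b), while (a) follows immediately from the puncturing limit.
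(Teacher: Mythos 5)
Part (a) of your plan is fine and is essentially what the paper means by ``follows directly'': subdomains of a stable surface are stable, so puncturing at an arbitrary point $p$, applying Theorem~\ref{diameter1} to $\Sigma\setminus B_p(\epsilon)$, and letting $\epsilon\to 0$ gives $d_\Sigma(x,p)\le \frac{2\pi}{\sqrt{\Lambda_0}}$ for all $x,p$, hence the diameter bound.

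For (b), your skeleton --- two disjoint intrinsic balls $B_p(R)$, $B_q(R)$ with $R>\frac{2\pi}{\sqrt{\Lambda_0}}$, and the observation that index $1$ rules out two negative directions with disjoint supports --- is the same mechanism as the paper's proof (there one cuts along a geodesic circle chosen so that both complementary pieces are stable and applies the inradius bound to each piece). The genuine weak point is your last step: you assert that the proof of Theorem~\ref{diameter1} only uses the stability inequality evaluated at one specific radial test function, so that $Q(\phi_p)\ge 0$ for that single $\phi_p$ suffices. You are not entitled to that: Theorem~\ref{diameter1} is quoted as a black box from \cite{SY83} and \cite[Theorem 10.2]{GrLa83}, and those arguments use stability of the whole region (via the first eigenfunction / a weighted length functional, or a family of test functions); in particular the Gauss curvature term $\int_\Sigma K\,\phi^2$ in the stability inequality cannot simply be dropped for one arbitrary fixed $\phi$. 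Fortunately, your own disjoint-supports trick already gives the stronger conclusion you actually need: if both $B_p(R)$ and $B_q(R)$ were unstable as domains, each would carry a compactly supported function with $Q<0$, and these have disjoint supports, contradicting index $1$; hence at least one of the two balls is stable for \emph{all} compactly supported variations, and Theorem~\ref{diameter1} applies to it verbatim, giving $d(p,\partial B_p(R))\le \frac{2\pi}{\sqrt{\Lambda_0}}<R$ (or the same at $q$), the desired contradiction. With that replacement (and choosing $R$ generically so the geodesic circle is an admissible boundary), your argument for (b) is correct and coincides with the paper's.
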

\begin{proof}
Part $(a)$ follows directly from the diameter estimate in \eqref{diameter1}. 

Part $(b)$ also follows from \eqref{diameter1}, as in \cite[Proposition 2.2]{LiZh18}. 
Indeed, we can pick two points $p$ and $q$ at a distance $\diam(\Sigma)$ and consider a geodesic ball $B_r(p)$ with $r = \frac{\diam(\Sigma)}{2}$. Since $\Sigma$
has index $1$ either the connected component of $\Sigma \setminus \partial B_r(p)$
that contains $p$ or the connected component that contains $q$ must be stable. 
Then the result follows from the estimate for the filling radius.
\end{proof}

\subsection{Decomposition of 3-manifolds}
Throughout this section, suppose $M^3$ is a closed three-manifold and $h$ metric on $M$ of positive scalar curvature $R_h\geq \Lambda_0>0$. To prove Theorem \ref{thmmaincompact}, note that we may restrict ourselves to the case when the metric $h$ is bumpy \cite{White17}. 
	
We start by cutting $M^3$ along disjoint closed embedded minimal surfaces to obtain the following decomposition result.

\begin{definition} \label{def: prime}
A 3-manifold $N$ with non-empty boundary is
geometrically prime if 
\begin{enumerate}
    \item $N$ is diffeomorphic to a handlebody with some 3-balls removed;
    \item there are no closed embedded minimal surfaces
    in the interior of $N$;
    \item there exists a closed connected component $\Sigma$ 
(that we'll call ``large'') of $\partial N$ that is either mean convex or minimal of Morse index $1$;
    \item $\partial N\setminus \Sigma $ is either empty or a disjoint union 
    of stable minimal 2-spheres.
\end{enumerate}
\end{definition}

We will also need to define a non-orientable version
of a geometrically prime region.

\begin{definition} \label{def: prime non-orientable}
A 3-manifold $N$ with non-empty boundary is
non-orientable geometrically prime if 
\begin{enumerate}
    \item $N$ is diffeomorphic to $\RP^2 \times [0,1]$ with some 3-balls removed;
    \item there are no closed embedded minimal surfaces
    in the interior of $N$;
    \item there exists a closed connected component $\Sigma \cong \RP^2$ 
(that we'll call ``large'') of $\partial N$ that is either mean convex or minimal of Morse index $1$;
    \item $\partial N\setminus \Sigma $ is a disjoint union 
    of one stable minimal projective plane and stable minimal 2-spheres.
\end{enumerate}
\end{definition}

\begin{theorem}[Decomposition Theorem] \label{decomposition}
Let $M^3$ be a closed three-manifold with a bumpy metric $h$
of scalar curvature $Scal(h)\geq \Lambda >0$. 
%Let $g$ denote the maximal Heegaard genus of prime factors 
%in the prime decomposition of $M$ that are not homeomorphic to $S^1 \times S^2$
%and set $g=0$ if all prime factors are homeomorphic to $S^1 \times S^2$
%or $M$ is simply connected.
Then, there exist closed embedded disjoint
%two-sided 
minimal surfaces $S_1,S_2,\ldots, S_n$ on $M$ such that:
\begin{itemize}
    \item[(a)] $genus(S_i) \leq 2$;
    \item[(b)] $M \setminus \bigcup S_i$ is a disjoint
    union of geometrically prime and non-orientable geometrically prime regions.
%	\item[(b)]  $S_1,S_2,\ldots, S_n$ are pairwise disjoint;
%%	does not have any
%	stable closed embedded minimal surfaces in the interior;
%	\item[(d)]  Each connected component $K$ of $M \setminus \bigsqcup S_i$
%	has exactly one boundary component of Morse index $1$, and all other
%	boundary components are strictly stable minimal surfaces.
	%\item[(c)] Each connected component $N$ of $M\setminus \{\Sigma_1,\Sigma_2,\ldots,\Sigma_k\}$ is a manifold with boundary satisfying $H_2(N,\partial N) =0$ and having exactly one boundary component which is an unstable minimal surface. 
\end{itemize}
\end{theorem}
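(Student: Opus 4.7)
The idea is to iteratively cut $M$ along minimal surfaces, each of which will be one of the $S_i$, maintaining a collection of disjoint cuts so that at termination every remaining open region is geometrically prime.

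\emph{First cut.} Using the Heegaard genus $g$, I produce a one-parameter sweepout of $M$ by closed surfaces of genus at most $g$ degenerating to graphs at both endpoints. Running an Almgren--Pitts/Simon--Smith min-max on this sweepout, with the index upper bound of Marques--Neves and the genus control of Ketover (and De Lellis--Pellandini), I obtain an embedded closed minimal surface $S_1 \subset M$ with $\text{index}(S_1) \leq 1$ and $\genus(S_1) \leq g$. By the Schoen--Yau rigidity implicit in Theorem \ref{area}(a), if $S_1$ is stable it must be a 2-sphere; otherwise it has index exactly $1$. I cut $M$ along $S_1$: the one or two resulting regions inherit a copy of $S_1$ as a distinguished boundary component of the type required by Definition \ref{def: prime}.

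\emph{Inductive step.} Given a current region $N$, I check whether it is geometrically prime. If yes, leave it alone. If not, there is a closed embedded minimal surface in $\text{int}(N)$, and I cut $N$ along an appropriate one. If $N$ contains a stable minimal surface in its interior, by positive scalar curvature it must be a 2-sphere and I cut along it, producing new stable $2$-sphere boundary components on each side. If $\text{int}(N)$ contains no stable minimal surface but does contain some minimal surface, I run a relative index-bounded min-max in $N$, using $\partial N$ as a barrier (mean convex or stable-minimal) and a Heegaard-compatible sweepout of $N$ whose genus descends from the ambient bound $g$; this produces an embedded minimal surface in $\text{int}(N)$ of index $\leq 1$ and genus $\leq g$, which I cut along. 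I keep track of an inherited ``large'' index-$1$ boundary component throughout, by always ensuring that either the region already has one from previous cuts, or I supply one via the min-max step before removing other interior surfaces.

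\emph{Termination and main obstacle.} Termination rests on the bumpy metric hypothesis combined with Theorem \ref{area}: the area bounds there, together with Choi--Schoen curvature estimates (using the index bound), give uniform $C^2$-geometry for all candidate cutting surfaces, and White's theorem then implies that $(M,h)$ contains only finitely many embedded minimal surfaces of index $\leq 1$. Since every minimal surface in the interior of a sub-region is also a minimal surface of $M$, each cut strictly decreases the finite count of interior candidates in the region being processed, so the procedure halts. The genus and disjointness conclusions are immediate from the construction. The main technical obstacle is the interior min-max step: producing an index-$\leq 1$, genus-$\leq g$ minimal surface in the interior of a sub-region with boundary and guaranteeing, via the maximum principle against mean-convex or stable-minimal $\partial N$, that the produced surface does not degenerate onto the boundary; this also requires that the Heegaard-type sweepout of $N$ one uses has genus bounded by $g$, which is the delicate point inherited from the ambient Heegaard splitting.
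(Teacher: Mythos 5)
The decisive gap is in your termination/primality step. Geometric primality (Definition \ref{def: prime}(1)) requires that the final pieces contain \emph{no} closed embedded minimal surface of any index, genus or area in their interiors, whereas your counting argument only controls -- and your cuts only remove -- embedded minimal surfaces of index at most $1$ satisfying the area bounds of Theorem \ref{area}. Bumpiness together with Choi--Schoen/Sharp compactness does give finiteness of that restricted class, but exhausting it neither forces your loop to terminate in prime pieces nor shows the surviving regions are prime: a region can fail primality solely because of a higher-index interior minimal surface, which your procedure cannot legally cut along (Definition \ref{def: prime} only permits stable spheres plus one index-$1$ or mean-convex ``large'' component on the boundary), and which a relative min-max of index $\le 1$ is not guaranteed to detect or eliminate. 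The paper's mechanism for exactly this point is different and is the heart of its proof: after cutting along a maximal disjoint family of stable minimal spheres, then minimal projective planes, the remaining pieces are (by \cite{MeSiYa82} and the classification of PSC $3$-manifolds) spherical space forms with balls removed, of Heegaard genus $\le g$; one takes the index-$1$ minimal surface isotopic to the Heegaard surface from \cite{KeLiSo}, pushes it slightly inward to a mean convex surface, and runs the level set flow. By White's theorem \cite{White00} the flow becomes extinct or converges to stable spheres/projective planes, which by maximality of the earlier collections must be the existing boundary, and Ilmanen's maximum principle \cite{Ilm92} then rules out \emph{any} smooth embedded minimal surface in the interior. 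Without an argument of this type your induction can stall on a non-prime region with no remaining index $\le 1$ surface to cut.

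Two further problems. First, ``stable $\Rightarrow$ $2$-sphere'' is true only for two-sided stable surfaces; one-sided stable minimal projective planes can occur (e.g.\ in $\RP^3$ summands), they obstruct primality, and they are not admissible boundary components in Definition \ref{def: prime}(3), so your sphere-only cutting step misses them -- the paper handles them with a separate maximal family of minimal $\RP^2$'s and tracks the resulting index-$1$ boundary spheres. Second, the ``relative min-max in $N$ with genus $\le g$, index $\le 1$, and no degeneration onto $\partial N$'' that you flag as the main obstacle is not a technicality to be smoothed over: it is precisely what the paper avoids having to prove, by first reducing to spherical space forms minus balls (where $g_{K_l}\le g$ is established) and then quoting the existing theorem of \cite{KeLiSo}. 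As written, your proposal presupposes the hardest ingredients of the decomposition rather than supplying them, and in addition does not explain how the bookkeeping of exactly one ``large'' boundary component per region, with all other components stable spheres, is preserved under repeated cutting.
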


%In fact, it follows from the maximum principle for mean curvature flow that condition (\ref{c}) can be strengthened to $K$ not having any closed minimal surfaces in its interior.

%\begin{remark} \label{rk:heegaard}
%Note that it follows from Thurston's Elliptization Conjecture
%proved by Perelman that every irreducible 3-manifold of positive
%scalar curvature is a Seifert fiber space with at most $3$
%exceptional fibers and therefore admits a Heegaard splitting 
%of genus less than or equal to $2$ (see \cite{Mor88}, \cite{BCZ91}).
%Therefore, we can replace $g$ by $2$ in the statement of the theorem.
%\end{remark}

\begin{proof}
Since the scalar curvature is positive
the only orientable two-sided stable minimal surfaces in 
$M$ are 2-spheres.
Let $S_1^1, \ldots, S_{k_1}^1$ be a maximal collection of pairwise
disjoint two-sided stable minimal 2-spheres and projective planes. Since the metric is bumpy 
the set of such spheres and projective planes is finite (possibly empty).

Each connected component $K_l$ of $M \setminus \bigcup_{i=1}^{k_1} S_i^1$
is irreducible, since otherwise
we could find a stable minimal sphere in its interior by \cite{MeSiYa82}.

First consider orientable components $K_l$.
By classification of 3-manifolds
of positive scalar curvature it follows that each 
orientable component $K_l$ 
is a spherical space form with possibly some balls removed (all the boundary components are stable minimal 2-spheres). 
%(We don't have any  $S^1 \times S^2$ factors since they will contain a minimal 2-sphere).
%Each $\RP^3$ factor will contain a stable minimal $\RP^2$. 

%For each $N_j$ that is homeomorphic to $S^1 \times \RP^2$ with some balls removed we consider a torus $T$ that intersects each
%slice $\{ t \} \times \RP^2$ in a simple closed curve
%that is not contractible in $\{ t \} \times \RP^2$.
%Minimizing in the isotopy class of $T$ we obtain 
%a stable one-sided torus $T'$ in $N_j$. Cutting $N_j$ along $T'$ we obtain $N_j'$, which is homeomorphic to a solid torus with possibly some balls (which are minimal stable boundaries) removed. The non-spherical boundary component of $N_j'$ is an index 1 minimal torus
%(the double cover of $T'$). 
%Next, for each $N_j$ that is homeomorphic to $\RP^3$ with some balls removed,
%we cut it along a stable minimal projective plane.

%Let $S_{1}^2, \ldots, S_{k_2}^2$ denote the collection of 
%minimal projective planes and tori from the paragraph above. 

%Then each connected component $K_l$ of $\bigcup N_j \setminus \bigcup_{i=1}^{k_2} S_i^2$
%is homeomorphic to some spherical space form other than $\RP^3$
%or a solid torus
%with possibly some balls removed. Note that if $K_l$
%has an index $1$ boundary sphere (that was obtained
%by cutting along a stable projective plane at the previous step),
%then it must be diffeomorphic to a subset
%of a 3-ball with all other boundary components
%stable 2-spheres. 

% note: components that are not RP^3 do not
% contain a projective plane. why?
% because we can take the double cover and 
% minimize to obtain a minimal 2-sphere to cut

If $K_l \cong \RP^3$ with some balls removed, then by \cite[Theorem 17]{KeLiSo} either there is 
an index $1$ minimal Heegaard torus, or there is a stable minimal $\RP^2$
with stable double cover (note that
although \cite[Theorem 17]{KeLiSo} is stated for closed manifolds
the proof applies to the case of a manifold with boundary consisting
of stable minimal spheres). In the second case we cut $K_l$
along the minimal stable $S^2_l \cong \RP^2$ to obtain a 3-sphere with some balls removed
and such that all boundary components are stable.
Applying min-max argument we obtain an index $1$ 2-sphere $S^3_l$ in the interior of $K_l \setminus S^2_l$.

It follows from Thurston's Elliptization Conjecture
proved by Perelman that every orientable irreducible 3-manifold of positive
scalar curvature is a Seifert fiber space with at most $3$
exceptional fibers and therefore admits a Heegaard splitting 
of genus less than or equal to $2$ (see \cite{Mor88}, \cite{BCZ91}).
%Therefore, we can replace $g$ by $2$ in the statement of the theorem.

Suppose $K_l \not\cong \RP^3$ is homeomorphic to a spherical space form
with some balls removed.
We have that the Heegaard genus $g_{K_l}$ of $K_l$
satisfies $g_{K_l} \leq 2$.
% this follows by Haken's theorem
% that reducible 3-manifolds have reducible Heegaard splittings
By \cite[Theorem 17]{KeLiSo} there exists a minimal surface $S^3_l \subset K_l$
of Morse index 1 that is
isotopic to the Heegaard splitting of $K_l$.

Now consider non-orientable components.
If $K_l$ is not orientable then it must be homeomorphic to $\RP^2 \times S^1$ with some balls
removed by \cite[Theorem 5.1]{Ep1961}. In this case we cut along a maximal collection of disjoint stable and index 1
projective planes and 2-spheres to obtain
a collection $K_{l,j}$ each homeomorphic
to $\RP^2 \times [0,1]$ with some balls removed or $S^3$ with some balls removed. If $K_{l,j}$ is homeomorphic to $S^3$ with some balls removed, then it is geometrically prime. We claim that if $K_{l,j}$ is homeomorphic
to $\RP^2 \times [0,1]$ with some balls removed then it is non-orientable geometrically prime. Indeed, if it has an index $1$ two-sphere as one of the boundary components, then we can minimize in the isotopy class (\cite{MeSiYa82}) to obtain a stable minimal 2-sphere in the interior.
%, which is a contradiction as we already cut along a maximal collection of stable 2-sphere. 
Similarly, if it has two or more index 1 projective planes as boundary components, then we can obtain a stable minimal projective plane in the interior.
If all boundary components are stable then by the min-max construction of
\cite{KeLiSo} there exists an index $1$ minimal $\RP^2$ in the interior of $K_{l,j}$.
This contradicts maximality of the collection of minimal spheres and projective planes.
%since $\RP^2 \times S^1$ has a strongly irreducible 
%Heegaard splitting of genus 2 (see \cite{Ochiai}, \cite{RuSc}), by \cite[Theorem 13]{KeLiSo} there exists an
%index 1 minimal $S^3_l$ surface isotopic to the Heegaard surface in $K_l$.

Hence, each connected component $K$ has exactly one boundary component 
$S \subset \partial K$ that is a minimal surface of index $1$.
We claim that $K$ now has no minimal surfaces
in its interior. 
%Let $K$ denote one of the connected components of $K_l \setminus S^3_l$. 
Consider a mean convex surface $\Sigma$ obtained by small perturbation of $S$ to the inside of $K$. Then the level set flow applied to $\Sigma$ will either 
become extinct in finite time or 
will converge to a disjoint collection
of finitely many stable minimal surfaces as
$t \rightarrow \infty$ by the result of White \cite{White00}. 
Since $K$ is homeomorphic to a handle body or $\RP^2 \times [0,1]$ with
some balls removed and 
we already cut the manifold along a maximal collection
of disjoint stable minimal spheres and projective planes we have that the flow converges
to $\partial K \setminus S$.
By the Maximum Principle \cite{Ilm92} $K$ has no smooth embedded minimal surfaces in its interior.
%Hence, the union of surfaces $S^j_i$, $j=1,2,3$, gives the desired collection
%of minimal surfaces of Morse index $0$ or $1$.
\end{proof}

In our proof, geometrically prime regions will be decomposed further along stable minimal disks with free boundary along the large components. 
The following Proposition follows from classical results of Meeks and Yau \cite{MeYa80, MeYa82}:

\begin{proposition} \label{free boundary disc}
Suppose $N$ is a geometrically prime region with large component $\Sigma$ which is mean convex and has genus at least 1. Let $\gamma \subset \Sigma$ be a non-contractible simple closed curve. Then, there exists a properly embedded free
boundary stable minimal disk $D$ with $\partial D \subset \Sigma$ homotopic to $\gamma$.
\end{proposition}

%\begin{proof}
%The case where $N$ is a handlebody with mean convex boundary follows by now classical results of Meeks and Yau \cite{MeYa80, MeYa82}.

%When $N$ is diffeomorphic to a handlebody with some balls removed, those balls all have stable minimal spheres as boundary, and we thus may extend $N$ across those spheres to obtain a mean convex domain $N'$ such that $N\subset N'$ and $N'\setminus N$ is foliated by mean convex spheres. Note that the large component $\Sigma$ is still part of the boundary of $N'$. We then apply Meeks-Yau minimization scheme for disks $D$ with boundary $\partial D$ in $\partial \Sigma$ representing a non-trivial loop in $\pi_1(\Sigma)$. Since $N'\setminus N$ is foliated by mean convex spheres, the minimizer will not intersect that region and thus be contained in $N$. By the maximum principle, it also not touch the spheres in $\partial N \setminus \partial \Sigma$, and thus is a properly embedded free
%boundary stable minimal disk $D$ with $\partial D$ in $\Sigma$.

%\end{proof}

\section{Mean curvature flow and tree foliations}
The foliation in Theorem \ref{thmmaincompact}
is constructed using Mean Curvature Flow with surgery with certain modifications to 
control the diameter. In this section we describe how to construct the foliation
of a geometrically prime or non-orientable geometrically prime region with controlled genus and area. 
In the next section we modify this foliation, so that the diameter
is also controlled.

\subsection{Mean Curvature Flow with surgery.} 
We start with an overview of results from the literature.
We will adopt the terminology of Haslhofer-Kleiner \cite{HaKl17} developed for domains of Euclidean space, which was later adapted to mean-convex domains of general three-manifolds by Haslhofer-Ketover \cite{HaKe19}. Another relevant reference is Brendle-Huisken \cite{BrHui18}.

We recall several definitions and the main results form \cite{HaKl17,HaKe19} that we will use. The first important definition is of a {\it smooth $\alpha$-Andrews} flow, for any $\alpha>0$. These consist, basically, of smooth families of mean convex domains $K_t$, $t \in I\subset \mathbb{R}$, with boundaries $\partial K_t$  moving by mean curvature flow and with the property that $\inf_{\partial K_t} H \geq \frac{4\alpha}{inj(M)}$ and which every point in $\partial K_t$ is $\alpha$-non-collapsed in the sense of Andrews \cite{And12} (See Definition 7.1, \cite{HaKe19}).

The whole point of performing surgeries comes from the fact that such smooth $\alpha$-Andrews cannot be extended indefinitely in time, as singularities can occur. To avoid those, one looks for regions of high-curvature and seeks to replace them before a singularity can form. To make that statement precise, one defines what it means for $K_t$ to have a {\it strong $\delta$-neck} with center $p$ and radius $s$ at time $t_0\in I$. Loosely speaking, this means that an appropriate parabolic rescaling of $K_t$ centered at $(p,t_0)$ is $\delta$-close to the evolution of a solid round cylinder in $D^2\times\mathbb{R}$ with radius 1 at $t=0$ (See Definition 7.2, \cite{HaKe19}). The notion of replacing the final time slice of  a strong $\delta$-neck by a pair of standard caps (surgery) can then be defined rigorously, as in the discussion immediately after Definition 7.2 of \cite{HaKe19}.

With the above terminology, we define a $(\alpha,\delta)$-flow as follows:
\begin{definition}[Definition 1.3, \cite{HaKl17}]\label{def:mcfsmooth}
An $(\alpha,\delta)$-flow $\mathcal{K}$ is a collection of finitely many smooth $\alpha$-Andrews flows $\{K^i_t\}_{t\in[t_{i-1},t_i]}$ $(i=1,2,\ldots,k;t_0<t_1<\ldots<t_n)$ in $M$ such that
\begin{itemize}
\item[(1)] for each $i=1,\ldots,k-1$, the final time slices of some collection of disjoint strong $\delta$-necks
 are replaced by pairs of standard caps as described in Definition 2.4 of \cite{HaKl17}, giving some domains $K^\sharp_{t_i}\subset K^i_{t_i}:=K^{-}_{t_i}$.
 \item[(2)] the initial time slice of the next flow, $K^{i+1}_t:=K^+_{t_i}$ is obtained from $K^\sharp_{t_i}$ by discarding some connected components. 
 %see Figure\footnote{Ask Bob if we can use this picture or put something else} \ref{fig:surgery}.
 \item[(3)] all necks in item (1) have radius $s$ bounded from above and below by a constants depending on $K$.
\end{itemize}
\end{definition}

%\begin{figure}[h]
%\includegraphics{surgery.png}
%\caption{}\label{fig:surgery}
%\end{figure}

When the initial data is mean-convex, one can give a more accurate description of the regions where surgery occurs and also the regions which will be discarded after surgery. This is encompassed in the following notion of an $(\mathbb {\alpha}, \delta, \mathbb{H})$-flow: We say that an $(\alpha,\delta)$-flow $K$ is an $(\mathbb {\alpha}, \delta, \mathbb{H})$-flow if (see Definition 1.17, \cite{HaKe19})

\begin{enumerate}
	\item $\inf H \geq \frac{4\alpha}{inj(M)}$.
	\item Besides the neck parameter $\delta>0$, we have three curvature-scales $H_\textrm{trig} >>
	H_\textrm{ neck} >> H_\textrm{ thick}>>1$, to which we refer as the trigger, neck, and thick curvature. 
	
	\item $H\leq H_\textrm{trig}$ everywhere, and surgery and/or discarding occurs precisely at times $t$ when $H=H_\textrm{trig}$ at some point. 
	
	\item The collection of necks replaced by caps is a minimal collection of solid $\delta$-necks of curvature $H_\textrm{neck}$ which separates the set $\{H\leq H_\textrm{thick}\}$ from $\{H\leq H_\textrm{trig}\}$ in $K^{-}_t$.
	\item The initial condition after a surgery, $K_t^+$ is obtained by discarding precisely those connected components with $H>H_\textrm{thick}$ everywhere. 
%	In particular, of each pair of facing surgery caps, precisely one if discarded. 
\end{enumerate}

With the above terminology, we may now state the main existence result for $(\mathbb {\alpha}, \delta, \mathbb{H})$-flows in \cite{HaKe19}, which, in turn, is an adaptation of the existence theorem in \cite{HaKl17}:

\begin{theorem}[Existence Theorem, {\cite[Thm.~7.7]{HaKe19}}]\label{thm:existencemcf}
Let $K\subset N^3$ be a mean convex domain. Then, for every $T<\infty$, choosing $\delta$ sufficiently small and $H_\text{\rm trig}>>H_\text{\rm neck}>>H_\text{\rm thick}>>1$, there exists an $(\mathbb {\alpha}, \delta, \mathbb{H})$-flow $\{K_t\}$, $t\in[0,T]$ with initial condition $K_0=K$, for some choice of $\alpha=\alpha(K,N,T)$.
\end{theorem}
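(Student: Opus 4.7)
The plan is to adapt the surgery construction of Haslhofer--Kleiner from $\mathbb{R}^3$ to a general closed $3$-manifold $N$, following the outline that Haslhofer--Kleiner themselves envisaged and that one implements in \cite{HaKe19}. The two technical ingredients that make the whole scheme run are (i) a uniform $\alpha$-Andrews noncollapsing estimate that is preserved under the flow and under surgery, and (ii) a canonical neighborhood / neck-detection theorem that identifies in advance where to cut.

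First I would fix the parameters. Starting from a smooth mean-convex $K \subset N$, pick $\alpha_0 = \alpha_0(K, N) > 0$ so that every boundary point of $K$ is $\alpha_0$-noncollapsed and $\inf_{\partial K} H \geq 4\alpha_0 / \mathrm{inj}(N)$; this is possible by compactness of $\partial K$. Then choose curvature scales $1 \ll H_{\text{thick}} \ll H_{\text{neck}} \ll H_{\text{trig}}$ and $\delta > 0$ small, in this order, so that each subsequent quantity can be made to depend on the previous ones via the estimates below. Second, I would run smooth mean curvature flow from $K$. The key a priori estimate is the Andrews noncollapsing inequality together with its corollary for mean-convex flows in an ambient $3$-manifold: there is a time $T_0 > 0$ and a constant $\alpha = \alpha(K,N,T) > 0$ such that as long as $H \leq H_{\text{trig}}$, the flow remains a smooth $\alpha$-Andrews flow. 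This uses that on $3$-manifolds one has local curvature and gradient estimates (Haslhofer--Kleiner's local version, extended to $N^3$ via the injectivity radius), and that the ambient curvature contribution to the evolution of the inscribed/exscribed radius is controlled on the timescale $[0,T]$.

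Third, when $\max H$ first hits $H_{\text{trig}}$, I would invoke the canonical neighborhood theorem for $\alpha$-Andrews flows: any point of sufficiently large curvature lies either in a $\delta$-neck of curvature between $H_{\text{neck}}$ and $H_{\text{trig}}$, or in a cap region, or in a convex component of bounded geometry after rescaling. This is proved by a blow-up / compactness argument reducing to the classification of ancient $\alpha$-noncollapsed mean-convex flows (shrinking cylinders, bowls, round spheres). Using this neighborhood structure, I would select a minimal disjoint collection of strong $\delta$-necks of curvature $H_{\text{neck}}$ that separate $\{H \leq H_{\text{thick}}\}$ from the trigger set, perform the standard-cap surgery on each, and discard any connected component on which $H > H_{\text{thick}}$ everywhere (such a component has a complete canonical neighborhood decomposition and is topologically a sphere or a capped cylinder). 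Fourth, restart the smooth flow from $K^+_{t_1}$. The crucial observation is that the standard cap and the discarding procedure are designed so that $K^+_{t_1}$ is again $\alpha$-noncollapsed and mean convex (with a possibly slightly worse $\alpha$, which is why the constants are chosen generously to absorb finitely many surgeries). Finally, I would show the scheme reaches time $T$: area or enclosed-volume monotonicity, together with the fact that each surgery removes a definite amount of volume at curvature scale $H_{\text{neck}}$, bounds the total number of surgeries by a constant depending only on $K$, $N$, $T$, and the chosen parameters.

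The hard part, and what genuinely requires the Haslhofer--Ketover adaptation beyond just citing Haslhofer--Kleiner, is localizing the noncollapsing and canonical neighborhood statements to a curved ambient space. In $\mathbb{R}^3$, the blow-up analysis directly lands in the classification of ancient noncollapsed solutions in Euclidean space; in $N^3$, one must show that at the relevant curvature scales the ambient geometry is negligible, so the blow-up limits live in $\mathbb{R}^3$ and the same classification applies. Making this quantitative, and checking that the Andrews noncollapsing is preserved across surgeries in a way that is uniform on $[0,T]$, is where the most care is needed; everything else is a careful but essentially mechanical translation of the Euclidean construction.
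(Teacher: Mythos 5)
The paper does not prove this statement: it is quoted as Theorem~7.7 of Haslhofer--Ketover \cite{HaKe19}, which adapts the Euclidean surgery construction of Haslhofer--Kleiner \cite{HaKl17}, and the present paper only uses it as a black box. Your outline --- preservation of the $\alpha$-Andrews noncollapsing with $\alpha=\alpha(K,N,T)$ degrading only through the ambient curvature on $[0,T]$, the canonical neighborhood theorem via blow-up to ancient noncollapsed flows in $\mathbb{R}^3$, surgery on a minimal separating collection of strong $\delta$-necks at the trigger scale, discarding the components with $H>H_{\text{thick}}$, and bounding the number of surgeries by the definite volume drop at scale $H_{\text{neck}}^{-1}$ --- is precisely the strategy of those cited works, so it is correct in approach and consistent with the proof the paper relies on.
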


Next, we have the canonical neighborhood theorem, which gives a description of the regions with high curvature. 

\begin{theorem}[Canonical neighborhood theorem,{\cite[Thm.~7.6]{HaKe19}}]\label{thm:canonicalmcf} For every $\varepsilon>0$ there exists $H_\text{\rm can}<\infty$ such that if $\mathcal K$ is an $(\mathbb {\alpha}, \delta, \mathbb{H})$-flow with $\delta>0$ small enough and $H_\text{\rm trig}>>H_\text{\rm neck}>>H_\text{\rm thick}>>1$, then every spacetime point $(p,t)$ with $H(p,t)\geq H_\text{\rm can}$ is $\varepsilon$-close to either (a) an ancient $\alpha$-Andrews flow in $\mathbb{R}^3$ or (b) the evolution of a standard cap preceded by the evolution of a round cylinder $\overline{D}^2\times\mathbb{R}\subset\mathbb{R}^3$. 
\end{theorem}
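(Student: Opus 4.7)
The plan is to argue by the standard contradiction-and-blow-up scheme familiar from Perelman's work on Ricci flow and from its mean curvature flow analogues developed by Huisken-Sinestrari, Haslhofer-Kleiner, and Brendle-Huisken. Suppose no such $H_\text{\rm can}$ existed. Then for some $\varepsilon_0>0$ we could find a sequence of $(\alpha,\delta_n,\mathbb{H}_n)$-flows $\mathcal{K}_n$, with $\delta_n\to 0$ and the curvature scales tending to infinity as required, together with spacetime points $(p_n,t_n)$ satisfying $Q_n:=H(p_n,t_n)\to\infty$, such that no parabolic neighborhood of $(p_n,t_n)$ is $\varepsilon_0$-close to either of the two model flows.

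Next I would parabolically rescale: translate $(p_n,t_n)$ to $(0,0)$ and dilate by $Q_n$, obtaining rescaled flows $\tilde{\mathcal{K}}_n$ with $H(0,0)=1$. Andrews' non-collapsing condition is scale-invariant, so the rescaled flows remain $\alpha$-non-collapsed. Since $H_\text{\rm trig}/Q_n\to\infty$ and $H_\text{\rm neck}/Q_n\to\infty$, the rescaled flow near the basepoint on a fixed parabolic scale sees no trigger-scale curvature, and the only surgery data that can survive the rescaling is a cap replacement occurring at bounded rescaled time in the past. The global convexity and interior curvature derivative estimates of Haslhofer-Kleiner, transplanted to Riemannian ambients by Haslhofer-Ketover, together with non-collapsing, produce smooth subsequential convergence of $\tilde{\mathcal{K}}_n$ to a limiting flow $\tilde{\mathcal{K}}_\infty$ in $\mathbb{R}^3$ defined on some time interval $(-\tau_0,0]$.

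Two cases arise depending on how much of the past of $(p_n,t_n)$ escapes the surgeries after rescaling. If $\tau_0=+\infty$, i.e. no surgery affects a rescaled parabolic neighborhood for any finite rescaled time, then $\tilde{\mathcal{K}}_\infty$ is an ancient, $\alpha$-non-collapsed, mean convex flow on $\mathbb{R}^3$, giving case (a). Otherwise, there is a first rescaled time $-\tau_0$ at which a cap replacement occurs at the basepoint; the surgery definition with $\delta_n\to 0$ forces the pre-surgery geometry to be an exact round solid cylinder in $\overline{D}^2\times\mathbb{R}$ in the limit, and the post-surgery geometry to be exactly a standard cap, so $\tilde{\mathcal{K}}_\infty$ is the evolution of a standard cap preceded by the evolution of a round cylinder, giving case (b). Either outcome contradicts the failure of $\varepsilon_0$-closeness for large $n$. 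The main obstacle is the smooth compactness step: extracting the limit requires \emph{local} curvature bounds that do not follow from mean convexity alone, so one must invoke the full strength of the non-collapsing-based a priori estimates of Haslhofer-Kleiner and Haslhofer-Ketover, which is precisely where the Andrews condition and the surgery hierarchy $H_\text{\rm trig}\gg H_\text{\rm neck}\gg H_\text{\rm thick}$ are essential.
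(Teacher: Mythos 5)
This statement is not proved in the paper at all: it is quoted directly from Haslhofer--Ketover \cite[Thm.~7.6]{HaKe19} (itself an adaptation of Haslhofer--Kleiner's canonical neighborhood theorem in \cite{HaKl17}), so there is no in-paper argument to compare against. Your contradiction-and-rescaling sketch is essentially the strategy of the cited proofs --- blow-up at the bad points, scale-invariance of the Andrews condition, compactness via the local curvature and convexity estimates for flows with surgery, and the dichotomy between an ancient limit and a cap-preceded-by-cylinder limit --- so it faithfully summarizes the source, with the understanding that the substantial technical work (the a priori estimates and the case analysis near surgery scales) resides in \cite{HaKl17,HaKe19} rather than in this paper.
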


A corollary of the canonical neighborhood theorem is the following description of the discarded components by surgeries:

\begin{corollary}\label{cor:discarded}
For $\varepsilon>0$ small enough, any $(\mathbb {\alpha}, \delta, \mathbb{H})$-flow satisfying the hypothesis of Theorem \ref{thm:canonicalmcf} has discarded components diffeomorphic to the three-disk $D^3$ or solid torus $D^2\times S^1$. 
\end{corollary}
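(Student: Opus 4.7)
\emph{Plan.} I would proceed in three steps.

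First, I would unpack the hypothesis: by item (5) in the definition of an $(\alpha,\delta,\mathbb{H})$-flow, every point of a discarded component $C$ satisfies $H > H_{\text{thick}}$. Choosing the curvature scales so that $H_{\text{thick}} \geq H_{\text{can}}$ from Theorem \ref{thm:canonicalmcf}, the Canonical Neighborhood Theorem applies at every spacetime point of $C$, so each such point is $\varepsilon$-close to either (a) an ancient $\alpha$-Andrews flow in $\mathbb{R}^3$ or (b) the evolution of a standard cap preceded by a round cylinder $\overline{D}^2 \times \mathbb{R}$.

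Second, I would invoke the classification of ancient $\alpha$-Andrews flows in $\mathbb{R}^3$, which up to rigid motions and parabolic rescalings are exhausted by shrinking round spheres, shrinking round cylinders, and rotationally symmetric bowl solitons. In each case the time-slice of the model (and likewise the time-slice of the standard-cap model in (b)) is diffeomorphic either to a convex $3$-ball or to a solid round cylinder $\overline{D}^2 \times I$. Therefore $C$ admits a finite cover by canonical neighborhoods, each diffeomorphic to a ball or to a solid tube, with consecutive pieces meeting in transverse disk cross-sections of the tube pieces.

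Third, I would finish with a combinatorial argument on the dual graph $G$ of this cover (vertices for ball pieces, edges for tube pieces). Connectedness of $C$ forces $G$ to be connected, and $C$ is then diffeomorphic to a handlebody of genus equal to the first Betti number $b_1(G)$. A tree $G$ yields $C \cong D^3$ and a single independent cycle yields $C \cong D^2 \times S^1$. The main obstacle is showing that $b_1(G) \leq 1$: I would appeal to the $\alpha$-noncollapsed property of the flow together with the minimality of the surgery-neck collection stipulated in item (4) of the flow definition in order to exclude multi-loop configurations, which would otherwise force redundant separating necks. This last step is essentially the content of the topological classification of discarded regions in Huisken--Sinestrari and Haslhofer--Kleiner, and is the only non-elementary input of the argument.
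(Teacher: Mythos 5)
The paper does not actually prove this corollary from scratch: it treats the statement as a known consequence of the surgery structure theory, pointing (in Remark \ref{remark:bilip}) to Corollary 8.9 of \cite{BHH17} and to \cite{BHH19}, where discarded components are shown to be convex spheres, capped-off chains of $\varepsilon$-necks, or $\varepsilon$-loops, which are diffeomorphic to $D^3$ or $D^2\times S^1$. Your overall shape (high curvature everywhere on a discarded component, hence canonical neighborhoods everywhere, then chain the neighborhoods together) is the right one, but two steps as you state them do not hold up. First, your claimed classification of ancient $\alpha$-Andrews flows in $\mathbb{R}^3$ as exactly shrinking spheres, shrinking cylinders, and bowl solitons is false as stated: the ancient ovals are also $\alpha$-noncollapsed ancient solutions, and more importantly such a full classification is neither available at the level of generality of Theorem \ref{thm:canonicalmcf} nor needed. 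The theorem only gives $\varepsilon$-closeness to some ancient $\alpha$-Andrews flow or to a standard cap; the usable output is that every high-curvature point of the domain sits in a region $\varepsilon$-close to a solid $\delta$-neck $D^2\times\mathbb{R}$, a solid standard cap, or lies on a compact convex component, and this is what the cited structure results actually extract.

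Second, and more seriously, the step that carries all the topology, namely $b_1(G)\leq 1$ for your dual graph, is not argued. Neither $\alpha$-noncollapsing nor the minimality of the surgery-neck collection in item (4) of the flow definition is the relevant mechanism: item (4) governs where surgeries are performed in $K^-_t$, not the shape of what is discarded. Your concluding remark that this step ``is essentially the content of the topological classification of discarded regions in Huisken--Sinestrari and Haslhofer--Kleiner'' is circular, since that classification is precisely the statement of the corollary. The actual reason no higher-genus handlebody can occur is elementary once the local models are identified correctly: each canonical piece is a solid neck with exactly two ends or a cap with exactly one end, so the nerve of the cover has every vertex of degree at most two; it is therefore a path (capped chain, giving $D^3$), a cycle (neck loop, giving $D^2\times S^1$), or the component is convex (again $D^3$). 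Your setup with ball-vertices of unbounded degree obscures exactly this point. If you intend to defer to the literature, do so explicitly as the paper does; if you intend a self-contained proof, the degree-two observation is the missing ingredient.
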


\begin{remark}\label{remark:bilip}
By picking the curvature parameters large enough, we may always ensure that, at the neck scale $H^{-1}_{\textrm neck}$, the ambient space looks as close as we want to Euclidean. In this situation, we may also ensure that there will exist a 2-Lipschitz map from any strong $\delta$-neck to the Euclidean cylinder $D^2\times \mathbb R$ of radius $H^{-1}_{\textrm neck}$. In addition, as observed in Corollary 8.9 of \cite{BHH17} and in \cite{BHH19}, each discarded component is either a convex sphere of controlled geometry, a capped-off chain of $\varepsilon$-necks, or an $\varepsilon$-loop. In each case, one may chop them into a collection of spheres, each contained in a small ball that can be mapped to a ball in $\mathbb{R}^3$ of radius $H^{-1}_{\textrm neck}$ by a 2-bilipschitz diffeomorphism. 
\end{remark}

\subsection{Mean Curvature flow in geometrically prime regions}\label{section:geoprimemcf} Let $N$ be a geometrically prime or non-oriented geometrically prime region obtained from Theorem \ref{decomposition}, and let  $\partial N= \Sigma \cup \Sigma_1 \cup \Sigma_2\cup\cdots \cup \Sigma_l$, where $\Sigma$ is the large component and $\Sigma_1, \Sigma_2,\ldots, \Sigma_l$ are stable minimal spheres or projective planes (note that $l$ might be zero). Since $\Sigma$ is a two-sided minimal surface of Morse index 1, we may apply a small inward deformation to the inside of $N$ to obtain a mean convex surface  $\Sigma_0$ using the lowest eigenfunction of the
stability operator (see proof of Theorem 3.1 in \cite{HaKe19}). By Theorem \ref{thm:existencemcf}, for $T>0$,  choosing $\delta<<1$ and $H_\text{\rm trig}>>H_\text{\rm neck}>>H_\text{\rm thick}>>1$ there will exist a $(\mathbb {\alpha}, \delta, \mathbb{H})$-flow $K_t$ with $\partial K_0=\Sigma_{0} \cup \Sigma_1 \cup \Sigma_2\cup\ldots \cup \Sigma_l$ for which the canonical neighborhood theorem applies. We prove:

\begin{proposition}\label{claim:dis}
Given $\varepsilon>0$, there exists a large $T>0$ such that, after making $\delta$ smaller and $H_\text{\rm thick}$ larger if necessary, there exists $(\mathbb {\alpha}, \delta, \mathbb{H})$-flow $K_t$ with $\partial K_0=\Sigma_{0} \cup \Sigma_1 \cup \Sigma_2\cup\ldots \cup \Sigma_l$, such that
%\Sigma_{0} \cup N_2 \cup N_3\cup\ldots \cup N_l$ on the interval $[0,T]$ such that:
\begin{itemize}
\item[(a)] The final slice $K_T$ is either empty, if $l=0$, or consists of smooth connected mean-convex surfaces which lie at distance at most $\varepsilon$ from the minimal stable boundaries 
$\partial N \setminus \Sigma$.
%$N_2,\ldots, N_l$. 
\item[(b)] The points of $N$ which are not in $\bigcup_t\partial K_t$, $t\in [0,T]$, are precisely the points belonging to discarded or caped components of the surgery process. In addition, the set 
$$\left\{x\in N\setminus \bigcup_{t\in[0,T]}\partial K_t~\bigg\vert~d(x,\partial N) \geq \varepsilon \right\}$$
has volume at most $\varepsilon$.
\end{itemize}
\end{proposition}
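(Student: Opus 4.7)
The plan is to apply Theorem~\ref{thm:existencemcf} to the mean convex domain $K_0$ obtained by bounding a region by $\Sigma_0$ together with small inward perturbations of each stable sphere $\Sigma_1,\dots,\Sigma_l$, producing an $(\alpha,\delta,\mathbb{H})$-flow $\{K_t\}_{t\in[0,T]}$ for any prescribed $T>0$, and then to exploit the defining property of a geometrically prime region that its interior contains no closed embedded minimal surfaces.

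For part~(a), I would argue that any family of smooth components of $\partial K_t$ persisting for arbitrarily long times must, in the limit of the surgery parameters $\delta\to 0$ and $H_{\text{thick}}\to\infty$, give rise to a closed stable minimal surface in $\overline{N}$. Since $N$ is geometrically prime, such a limit must lie in $\partial N\setminus\Sigma=\Sigma_1\cup\cdots\cup\Sigma_l$. If $l=0$ the only possibility is extinction of the level set flow in finite time; if $l\geq 1$, the mean convex level set flow converges to $\bigcup_i\Sigma_i$ as $t\to\infty$, each $\Sigma_i$ acting as a barrier and attractor via the perturbation by the lowest eigenfunction of its Jacobi operator. Hausdorff closeness of the $(\alpha,\delta,\mathbb{H})$-flow to the level set flow, as in \cite[Sec.~7]{HaKe19} and \cite{HaKl17}, then yields the conclusion of~(a) once $T$ is large and the parameters are sufficiently extreme.

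For part~(b), the first assertion is essentially a tautology of the surgery construction: by the definition of an $(\alpha,\delta,\mathbb{H})$-flow, any point of $K_0$ not lying on some $\partial K_t$ with $t\in[0,T]$ either still belongs to the final domain $K_T$ or was removed at some surgery time as part of a discarded or capped component. To bound the volume of the non-swept set inside $\{d(\cdot,\partial N)\geq\varepsilon\}$, I would split it into three contributions: the thin collar $N\setminus K_0$ (arbitrarily small by choice of the initial perturbation), the portion inside $K_T$ (which by part~(a) lies in an $\varepsilon$-neighborhood of $\partial N$ and so does not meet the region of interest), and the interior discarded or capped components. The last is the crux of the estimate. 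By the canonical neighborhood theorem (Theorem~\ref{thm:canonicalmcf}), Corollary~\ref{cor:discarded} and Remark~\ref{remark:bilip}, each such piece has volume $O(H_{\text{neck}}^{-3})$, and by Hausdorff convergence of the surgery flow to the mean convex level set flow, whose non-extinct part is confined to a small neighborhood of $\partial N$ for $T$ large, the total volume of discarded or capped pieces meeting $\{d(\cdot,\partial N)\geq\varepsilon\}$ can be driven below $\varepsilon$ by tightening $\delta$ and enlarging $H_{\text{thick}}$.

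The hard part will be quantifying the Hausdorff closeness between an $(\alpha,\delta,\mathbb{H})$-flow and the mean convex level set flow sharply enough to turn it into a genuine volume estimate on the union of interior discarded and capped components. In effect, one must show that in the limit of extreme parameters these components accumulate only near $\partial N$, which depends crucially on the absence of interior closed minimal surfaces in $N$, together with the covering estimates coming from Remark~\ref{remark:bilip} that control the volume of each individual discarded or capped piece.
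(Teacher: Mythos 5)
Your overall strategy --- control the level set flow using the absence of interior minimal surfaces and then approximate it by flows with surgery --- is the same as the paper's, but two steps do not work as written. First, your initial domain is not mean convex when $l\geq 1$: pushing a stable minimal sphere $\Sigma_i$ into $N$ produces a surface whose mean curvature vector points back toward $\Sigma_i$ (a strictly stable minimal surface has a neighborhood foliated by surfaces mean convex \emph{toward} it), i.e.\ out of the truncated region, so your $K_0$ violates the strict mean convexity $\inf H\geq 4\alpha/\mathrm{inj}(M)$ required to apply Theorem \ref{thm:existencemcf}; the eigenfunction perturbation gives the correct sign only for the index-one component $\Sigma$. The paper avoids this entirely by gluing three-disks to the sphere components while keeping each $\Sigma_i$ minimal and stable, so that the flowing boundary is $\Sigma_0$ alone and the spheres become interior barriers; White's Theorem 11.1 \cite{White00} then shows the level set flow $M_t$ converges smoothly, as $t\to\infty$, to a region whose boundary is a union of stable minimal surfaces, which must be exactly $\Sigma_1\cup\dots\cup\Sigma_l$ because $N$ is geometrically prime. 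This is what fixes the time $T$ in part (a); your appeal to the $\Sigma_i$ acting as ``attractors via the lowest eigenfunction of the Jacobi operator'' does not substitute for this construction.

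Second, the step you explicitly leave open --- upgrading Hausdorff closeness of the surgery flow to the level set flow into statements (a) and (b) --- is precisely the crux, and the paper closes it not with a new quantitative estimate but by citing Lauer \cite{La13}: for a fixed mean convex initial domain, $(\mathbb{\alpha},\delta_n,\mathbb{H}_n)$-flows with $H^n_{\mathrm{thick}}\to\infty$ converge in Hausdorff distance to the level set flow. Once $T$ is fixed so that $\partial M_T$ is graphical in an $\varepsilon/2$-neighborhood of $\Sigma_1\cup\dots\cup\Sigma_l$ (or, when $l=0$, so that the level set flow is extinct, using that any flow with surgery is a set-theoretic subsolution in the sense of \cite{Ilm94} and hence has a priori bounded extinction time), choosing $n$ large yields both the $\varepsilon$-closeness of $K_T$ and the smallness of the unswept set. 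Without identifying this convergence result your argument is incomplete at its decisive step; with it, the per-component volume bounds from Remark \ref{remark:bilip} that you propose to sum are not needed.
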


\begin{proof}
Our argument is in the vein of Theorem 8.1 in \cite{HaKe19}. Let us first consider the case when $l=0$, that is, $\partial N=\Sigma$. Since there are no closed stable minimal surfaces in the interior of $N$, by a result of White \cite{White00}, the level-set flow must become extinct in finite time, say $T$; that is, $K_T = \emptyset$. Because any mean curvature flow with surgery starting at $K_0$ is also a family of closed sets that is a set-theoretic subsolution for the level-set flow, in the terminology of Ilmanem \cite{Ilm94}, and the level-set flow $M_t$ is the maximal set-theoretic subsolution, we get an a priori bound $T<\infty$ for the extinction time of any mean curvature flow with surgeries starting at $K_0$. Thus, given $\varepsilon>0$, we may select $\delta<<1$ and $H_\text{\rm trig}>>H_\text{\rm neck}>>H_\text{\rm thick}>>1$ so that the $(\mathbb {\alpha}, \delta, \mathbb{H})$-flow $K_t$ starting at  $\partial K_0 = K$ will be sufficiently close to the level-set flow in the Hausdorff sense so that all the discarded regions by the flow will have area summing up to at most $\varepsilon$. 

Now, we assume $l\geq1$. We fill in $N$ by gluing three-disks to the boundaries $\Sigma_2,\ldots,\Sigma_l$ while maintaining their minimality and stability. By doing this we obtain a domain $M$ with mean-convex boundary $\partial M =\Sigma_0$ which contains stable minimal surfaces $\Sigma_1,\Sigma_2,\ldots,\Sigma_l$, but no other stable minimal surfaces in the region between $\Sigma_0$ and $\Sigma_1,\Sigma_2,\ldots,\Sigma_l$.
Consider the level-set flow (\cite{CGG91,ES91}) $\{M_t\}_{t\geq0}$ starting at $M_0=M$ and let 

$$M_\infty : = \bigcap_{t\geq 0} M_t.$$
By the barrier principle, since $\Sigma_1,\Sigma_2,\ldots,\Sigma_l$ are minimal, they will be contained in $M_t$ for all $t\geq 0$ and thus in $M_\infty$. Additionally, by Theorem 11.1 of White \cite{White00}, $M_\infty$ has finitely many connected components and the boundary of each one of them is a stable minimal surface. Since there is no stable minimal surface in region between of $\partial M$, and $\Sigma_1, \Sigma_2, \ldots , \Sigma_l$, we conclude that
$$\partial M_{\infty}=\Sigma_1\cup \Sigma_2\cup \cdots \cup \Sigma_l.$$ 

By the same theorem of White \cite{White00}, the convergence of $\partial M_t$  to $\partial M_{\infty}=\Sigma_1\cup \Sigma_2\cup \cdots \cup \Sigma_l$ is smooth as $t\nearrow\infty$.

Thus, given $\varepsilon>0$, we may pick $T>0$ such that $\partial M_T$ consists of a union of graphical surfaces on a $\varepsilon/2$-neighborhood of $\partial M_{\infty}=\Sigma_1\cup \Sigma_2\cup \cdots \cup \Sigma_l$. We consider a sequence of $(\mathbb {\alpha}, \delta_n, \mathbb{H}_n)$-flows $M^n_t$ on $[0,T]$ with initial condition $\partial M^n_0=\partial M = \Sigma_{0}$, $\delta_n,\nearrow\infty$ and thick curvature thresholds  $H^n_\text{\rm thick}\nearrow\infty$. 

Since the initial domain is kept fixed and the thick curvatures tend to infinity as $n\rightarrow\infty$, by the work of Lauer \cite{La13}, we have that the above sequence of flows with surgery converge to the level set flow in the Hausdorff sense. 
%https://www.overleaf.com/project/5d67c29c9d0b4b52da4bbe34
Thus, picking $n$ sufficiently large, we have an $(\mathbb {\alpha}, \delta_n, \mathbb{H}_n)$-flow that satisfy the desired hypothesis. 
\end{proof}

\subsection{Tree foliations.}
To prove Theorem \ref{thmmaincompact}, it will be convenient
to introduce a certain type of foliations of geometrically prime manifolds that are parametrized by a $1$-dimensional oriented tree $G$. This definition can be suitably extended to general three-manifolds by considering more general graphs, but we will not use such definition and will skip it entirely. 

\begin{definition} \label{def: tree}
Let $G$ be an oriented tree with vertices of degree $1$, $2$ or $3$. A family of surfaces $\{\Sigma_t\}_{t\in G}$ is a \emph{tree foliation} of
$N$ if there exists a 
continuous map $p: N \rightarrow G$ and continuous orientation preserving 
map $ s: G \rightarrow [0,T]$, such that
\begin{itemize}
    \item[(a)] for each $x \in G$ surface $\Sigma_x = p^{-1}(x)$ is connected;
%    \item[(b)] for each $x \in G$ the surface $s^{-1}(x)$ is connected;
    \item[(b)] $s^{-1}(0)$ is the unique vertex $v_r$, called the root, of $G$
    of degree $1$ that has an edge directed away from it;
%    surface $\Sigma_0$ corresponding to the root vertex is a connected minimal surface of Morse index $1$;
    \item[(c)] If $v$ is a vertex of degree $1$, then
    $\Sigma_v$ is a point or a connected component of 
    the boundary of $N$;
    \item[(d)] Let $\mathring{E}$ denote the interior of 
    an edge $E$ of $G$. Then for each edge $E$ the family $\{\Sigma_t\}_{t \in \mathring{E}}$
    gives a smooth foliation of $p^{-1}(\mathring{E})$;
    \item[(e)]\label{e:tree} For each vertex $v$ of degree
    $d=2$ or $3$ 
%    there is a smooth closed curve 
%    $\gamma \subset \Sigma_v$,
%    such that 
%    $\Sigma_v \setminus \gamma$
    $\Sigma_v$ is a
    union of a smooth closed connected surface $S_1$ and a 
    smooth
    connected surface $S_2$ with boundary $\gamma \subset S_1$.
%    meeting 
%    along finitely many disjoint closed curves $\gamma= \bigsqcup \gamma_j$. 
    If $E$ is an edge adjacent to
    $v$, then away from $\gamma$ surfaces $\Sigma_t$, $t \in E$, converge
    smoothly and graphically to a subset of 
    $\Sigma_v$ as $t \rightarrow v$.
%    \item[(d)] The graph distance 
\end{itemize}
\end{definition}

%and have mean convex boundary for every value of $t$ which is not the image of a vertex of $\Gamma$ by $h:\Gamma\rightarrow \mathbb{R}$. 

We use mean curvature flow with surgery to prove existence of a tree foliation of a geometrically prime 3-manifold with area and genus controlled in terms of area and genus of the large boundary component. Additionally we would like surfaces in the foliation to satisfy certain conditions on their mean curvature. 

\begin{definition}
Let $\{\Si_x \}_{x \in G}$
be a tree foliation of $N$ with $p:N \rightarrow G$, $s:G \rightarrow [0,T]$ as in Definition \ref{def: tree}.  We say that the {\it foliation is mean convex on} $U \subset [0,T]$
if for every $t \in U$ the set 
$(s\circ p)^{-1}(t)$ consists of a disjoint union of smooth mean-convex surfaces with mean curvature vector positively oriented with respect to $s:G \rightarrow [0,T]$.
\end{definition}

\begin{definition}
Given any $\varepsilon>0$, we will say that a set $U \subset [0,T]$ is $\varepsilon$-{\it small} if $U$ is a union of finitely many disjoint 
open intervals $(a_i, b_i)$ and for each $i$
we have $(s\circ p)^{-1}([0,b_i]) \subset 
N_\varepsilon((s\circ p)^{-1}([0,a_i]))$, where $N_\varepsilon(\cdot)$ denotes the tubular neighborhood taken in $N$ with respect to the metric $h$.
\end{definition}

The main result of this section is the existence
of such foliation on geometrically prime 3-manifolds with the following properties:

\begin{proposition} \label{MCF_tree}
Let $N$ be a geometrically prime or non-oriented geometrically prime
3-manifold and let $\Sigma$ denote
the large boundary component of $N$.
For every $\varepsilon>0$ there exists a tree foliation
$\{\Sigma_x\}_{x \in G}$, such that
\begin{enumerate}
    \item $\genus(\Sigma_x)\leq \genus(\Sigma)$
    \item $\Area(\Sigma_x)\leq \Area(\Sigma)$
\end{enumerate}
Moreover, the foliation is mean convex on $[0,T] \setminus B$
for an $\varepsilon$-small set $B$.
\end{proposition}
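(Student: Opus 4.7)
The plan is to apply the mean curvature flow with surgery from Proposition~\ref{claim:dis} to $\Sigma$, interpret its spacetime track as a tree foliation of $N$ on the smooth portion, and splice in auxiliary foliations over the discarded components. First I perturb the large boundary component $\Sigma$ slightly inward to a mean-convex surface $\Sigma_0$ using the first eigenfunction of the stability operator (as in the proof of Theorem~3.1 of \cite{HaKe19}), preserving area and genus up to an arbitrarily small error. Applying Proposition~\ref{claim:dis} with a tolerance $\varepsilon'\ll\varepsilon$ and neck-scale $H_\text{neck}^{-1}<\varepsilon$ produces an $(\alpha,\delta,\mathbb H)$-flow $\{K_t\}_{t\in[0,T]}$ with $\partial K_0=\Sigma_0\cup\Sigma_1\cup\cdots\cup\Sigma_l$ whose discarded set has small volume and whose final slice lies in an $\varepsilon$-neighborhood of $\Sigma_1\cup\cdots\cup\Sigma_l$.

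Next I build the tree $G$: the root $v_r$ satisfies $\Sigma_{v_r}=\Sigma_0$ and $s(v_r)=0$; smooth segments of the evolution of each connected component of $\partial K_t$ between consecutive surgery events give open edges parametrized by $t$; and at each surgery time I insert a vertex matching the local model of Definition~\ref{def: tree}(e). A separating neck surgery produces a degree-$3$ vertex (one component becomes two), a non-separating (handle) surgery produces a degree-$2$ vertex where the genus of that component drops by one, and a discarded component produces a degree-$2$ vertex on the main branch from which hangs a side-subtree that foliates the discarded region. Leaves correspond to extinction events, to completions of discarded-component subtrees, and, at $t=T$, to the stable boundary surfaces $\Sigma_1,\ldots,\Sigma_l$ via Proposition~\ref{claim:dis}(a).

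For each discarded component $D$, Remark~\ref{remark:bilip} gives a decomposition of $D$ into a chain of small balls, each $2$-bilipschitz to a Euclidean ball of radius $H_\text{neck}^{-1}$; pulling back the concentric-sphere foliation on each ball and gluing them along the chain produces a subtree foliating $D$ by surfaces of area $O(H_\text{neck}^{-2})$ and genus at most $1$ (tori only arise when $D\cong D^2\times S^1$, which by Corollary~\ref{cor:discarded} forces the ancestor component of $\partial K_t$ to have had positive genus, hence $\genus(\Sigma)\geq 1$). Properties (1) and (2) then follow from the standard facts that genus is non-increasing under smooth MCF (an ambient isotopy) and under either type of neck surgery, and that area is strictly decreasing under smooth MCF (it is the gradient flow of area) and under each surgery (a long cylindrical neck is replaced by two small caps), together with the smallness of the subordinate foliation areas.

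The main obstacle is verifying that the set $B$ of parameter values where the foliation fails to be mean convex is $\varepsilon$-small. Since the $(\alpha,\delta,\mathbb H)$-flow is uniformly mean convex ($\inf H\geq 4\alpha/\mathrm{inj}(M)>0$), $B$ is precisely the union of parameter intervals devoted to foliating discarded components. I subdivide the foliation of each discarded component $D_j$ into a finite chain of sub-intervals $(a_{j,k},b_{j,k})$, each advancing the foliated region by a tubular step of length at most $\varepsilon$. The first sub-interval attaches to the main flow across the $\delta$-neck of scale $H_\text{neck}^{-1}<\varepsilon$, placing $D_j$'s first chunk within $N_\varepsilon((s\circ p)^{-1}([0,a_{j,1}]))$, and subsequent sub-intervals satisfy the same relation inductively. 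Since only finitely many surgeries occur and each $D_j$ has diameter uniformly bounded in terms of the ambient geometry and $T$, the collection $B$ is a finite union of $\varepsilon$-small intervals, completing the proof.
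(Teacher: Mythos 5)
Your overall strategy (run the $(\mathbb{\alpha},\delta,\mathbb{H})$-flow of Proposition \ref{claim:dis} from an inward perturbation of $\Sigma$, read off edges of $G$ from the smooth flow segments, put vertices at surgeries, and hang side foliations over discarded components) is the same as the paper's, but as written it has two genuine gaps. First, the sharp bound $\Area(\Sigma_x)\leq\Area(\Sigma)$ does not follow from ``area decreases under MCF and under surgery plus smallness of the subordinate pieces.'' The fibers of the tree foliation are not just slices of the flow: at each vertex the surface is the pre-surgery component \emph{union} a small disc glued into the neck, the interpolating families sweep from half-cylinder-plus-disc to the caps, and the discarded-component foliations add further area. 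All of these exceed the area of the corresponding flow slice, so your argument as stated only yields $\Area(\Sigma)+\varepsilon$. The paper handles exactly this by proving the $+\varepsilon$ version first and then creating a definite slack $\alpha$ (a short initial smooth mean-convex flow strictly decreasing area by $\alpha$, discarding the swept collar) before choosing $\varepsilon<\alpha$; the quantifier order matters, since the slack must be fixed before the flow parameters $\delta, H_{\rm neck}$ controlling the added area are chosen. You say your inward perturbation preserves area ``up to an arbitrarily small error,'' which is the opposite of what you need: you must record a definite area drop and then make the glued-in discs and discarded foliations smaller than it.

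Second, the actual content of verifying Definition \ref{def: tree} at surgery times is missing. You assert that each surgery ``matches the local model'' of item (e), but the vertex surface and the family of surfaces interpolating between the neck and the two caps have to be constructed (in the paper: via the $2$-bilipschitz chart of Remark \ref{remark:bilip}, a flat disc $D$ in the middle of the cylinder, and explicit disc families $D^{l,1}_t$ foliating the region between half-cylinder-plus-$D$ and the cap spheres), and simultaneous neck replacements at a single surgery time must be staggered one ball at a time, otherwise a single vertex can have degree larger than $3$, violating the definition of a tree foliation. Relatedly, your claim that $B$ is ``precisely'' the parameter set devoted to discarded components is not correct: the interpolation intervals through the neck-caps replacements are also not mean convex (the glued disc and the disc families are not moving in a mean-convex way), so $B$ must include small intervals $[t_i,t_i+\sigma]$ around every surgery time, as in the paper; your subdivision argument for $\varepsilon$-smallness is fine in spirit but should be run for these intervals as well. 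Your treatment of discarded components (ball chains, allowing genus-one fibers only when a solid torus is discarded, which forces $\genus(\Sigma)\geq 1$) is an acceptable variant of the paper's procedure of cutting them into spheres and contracting via \cite[Lemma 4.1]{CL2019}.
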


\begin{remark}
It should be possible to prove existence
of a tree foliation that is mean convex everywhere on
$[0,T]$, except for finitely many points $t_i \in [0,T]$
(corresponding to images of vertices of $G$ in $[0,T]$)
using techniques from \cite{BHH19} and \cite{HaKe19}.
However, the weaker version stated above is sufficient 
for our purposes.
\end{remark}

\begin{proof} We will start by proving slightly weaker version of the proposition: $(\ast)$ Given $\varepsilon>0$, there exists a tree foliation
$\{\Sigma_x\}_{x \in G}$, such that
\begin{enumerate}
    \item $\genus(\Sigma_x)\leq \genus(\Sigma)$ 
    \item $\Area(\Sigma_x)\leq \Area(S)+\varepsilon$
\end{enumerate}
which is mean convex on $[0,T] \setminus B$ for an $\varepsilon$-small set $B$. 

The proof follows essentially by flowing the large component $\Sigma$ of $\partial N$ by mean curvature flow, which is area-decreasing and does not increase genus (even after surgeries). Because of potential surgeries and components getting discarded, we will need to introduce a procedure to foliate the surgery and discarded regions (without increasing the genus or area by more than $\varepsilon$).

First, let's set up the mean curvature flow with surgeries. We use the notation and apply the existence results of Section \ref{thm:existencemcf} on mean curvature flow with surgeries starting at the large component of $\partial N$. Let $\{K^i_t\}$, $i=1,\ldots,k+1$, be the smooth flows that comprise the flow with surgeries $K_t$ (Definition \ref{def:mcfsmooth}) and $0<t_1<...<t_k<T-\varepsilon$ the respective surgery times. We may assume $\varepsilon$ is sufficiently small and $T$ is chosen sufficiently large so that $\partial K_{T-\varepsilon} \setminus \partial N$
is a disjoint union of spheres, each lying in a small 
tubular neighborhood and being graphical over a stable minimal
surface in $\partial N\setminus \Sigma$. We may extend the family to the interval $[T-\varepsilon,T]$ by defining a smooth isotopy of each connected component of
$\partial K^k_{T-\varepsilon} \setminus \partial N$
to the corresponding minimal surface $\Sigma_1,\ldots, \Sigma_l$.

We now give an informal description of 
how we derive the foliation $\{ \Sigma_x\}_{x \in G}$.
First consider $t \in [0,T]$ outside of small
neighborhood of surgery times $t_i$.
For each family $\{\partial K_t^i \}$, 
we let $\{\Sigma_x\}$ to be smooth families of connected 
components of $\partial K_t^i \setminus \partial N$
parametrized by a disjoint union of intervals $E_1^i, ..., E_{n_i}^i$. From these intervals we start building edges of the graph $G$, which for the moment are disconnected. 

At a surgery time, the replacement of a neck with two standard caps occurs in a small ball, where the metric is nearly Euclidean
and the neck is close to a standard cylinder.
For each such replacement we add a vertex $v$ to the graph $G$ and define surface $\Sigma_v$ that is equal to the the union
of the connected component that contains the neck and a small disc $D$ in the middle of the neck/cylinder. This process starts connecting the edges of the graph $G$ corresponding to the intervals $E_1^i, ..., E_{n_i}^i$: we define deformations of the unions of each of the half-cylinders with $D$ into the corresponding standard cap. Proceeding this way for each neck-caps replacement we obtain
the union of connected components of $\partial K^{i+1}_t$
for some $t$ slightly larger than $t_i$ and discarded components of $\partial K^i_{t}$. For each connected component of the discarded part
we define a tree foliation that ends in a collection of points. This can be done since the discarded part 
has very small area. 

Note that in this procedure, depending on whether the neck-caps surgery disconnects a  component of $\partial K_t^i$, the vertex $v$ will be of degree bigger than 1, and potentially great than 3. To account for this and to have only vertices with degree at most 3, we will change the above procedure slightly by hand by gluing only one disk $D$ at a time for every surgery time where several neck-caps replacements take place. We describe the construction in more detail below.

Let $\sigma>0$ be a small constant sufficiently smaller than $\varepsilon$.
Define maps $p_i: K_{t_i}^+ \setminus K_{t_{i-1}+ \sigma}^i 
\rightarrow \bigcup_j E^i_j$, so that the fibers $\{p_i^{-1}(t) \}$, 
$t \in E^i_j = [t_{i-1}+ \sigma, t_i]$, are a smooth family of connected surfaces evolving
by mean curvature flow. The map $s: \bigcup_j E^i_j \longrightarrow [t_{i-1}+\sigma, t_i]$ is defined so that $\partial K_t^i \setminus \partial N = (s \circ p)^{-1}(t)$.

It remains to describe the family of surfaces
$(s \circ p)^{-1}(t)$ for $t \in B=\bigcup_{i=1}^k [t_i, t_i+\sigma]$,
that is, as the family goes through a surgery. These surgeries can be of two types: neck-caps replacements or discarding components. 

\vspace{0.1in}
\textbf{Vertices corresponding to neck-caps surgeries.}
We define vertices corresponding to surgeries of the mean curvature flow
and the families of surfaces parametrized by
neighborhoods of the vertices in the tree $G$. Fix a surgery time $t_i$. Let
$\{ B_j \}_{j=1}^{m_i}$ be the set of disjoint balls
so that $K^\sharp_{t_i}$ is obtained
by replacing $\delta$-necks in balls
$B_j$ by pairs of standard caps.
$K^{i+1}_{t_i}$ is then obtained from 
$K^\sharp_{t_i}$ by discarding small connected components. 

To ensure the graph $G$ only has vertices with degree at most 3, we will deal with each $B_j$ separately. We pick a separation time $\tau_i \in (0, \frac{\sigma}{2})$
sufficiently small so that
the evolution of smooth mean curvature flow
of $K^{i}_t$ and $K^{i+1}_t$ exists
for $t \in [t_i, t_i+\tau_i]$ and it
is graphical over $K^{i}_{t_i}$ and $K^{i+1}_{t_i}$, correspondingly. By Remark \ref{remark:bilip}, for each $j=1,..., m_i$, we have maps
$\Phi_j: B_j \rightarrow \R^3$, such that:

\begin{enumerate}
    \item $\Phi_j$ is a $2$-bilipschitz 
    diffeomorphism onto its image in
    $\R^3$;
    \item the images of the $\delta$-necks in $\Phi_j(\partial K^{i}_t \cap B_j)$ are contained in $C_t^j$, where $C_t^j$
    is a family of concentric infinite cylinders in $\R^3$ for $t \in [t_i, t_i + \tau_i]$;
    \item the images of the caps in $\Phi_j(\partial K^{i+1}_t \cap B_j)$ are contained in  $S_t^{j,1} + S^{j,2}_t$, where $S^{j,1}_t$ and $S^{j,2}_t$ are two families
    of concentric spheres in $\R^3$ for $t \in [t_i, t_i + \tau_i]$.
\end{enumerate}

Let $t_{i,j} = t_i + \frac{(j-1) \tau_i}{2m_i}$. At $t_{i,1}=t_i$, we focus on the neck replacement taking place in $B_1$. Let $\Gamma$ denote the limiting surface of $(s \circ p)^{-1}(t)$ as $t \rightarrow t^{-}_{i,1}$ and let $C^1 = \Phi_1(\Gamma \cap B_1)$, which is a cylinder by construction (Remark \ref{remark:bilip}). Let $D$ denote a flat disc in $\R^3$ that is perpendicular to the axis of $C^1$ and is equidistant from the two boundary curves of $C^1$. We define a vertex $v_{i,1}$ in the tree 
$G$ and surface $\Sigma_{v_{i,1}}=p^{-1}(v_{i,1})$ to be 
the union of $\Phi_1^{-1}(D)$ and the connected component of
$\Gamma$ that intersects $B_1$.

The disc $D$ separates the cylinder $C^1$ into two components, 
$C_1^1$ and $C_2^1$.
For $t \in [t_{i,1},t_{i,1}+\tau_i]$
and $l=1,2$
we define a family of discs $D^{l,1}_t$
with the following properties:
\begin{enumerate}
    \item $D^{l,1}_{t_{i,1}} = C^1_l \cup D$;
    \item $\partial D^{l,1}_t = \partial S^l_t$;
    \item $\{D^l_t\}$ is a family of smooth 
    discs foliating the region between  
    $C^1_l \cup D$ and $S^l_{t_{i,1}+\tau_i}$.
\end{enumerate}

Finally, for $t \in [t_{i,1},t_{i,1}+\tau_i]=[t_{i,1}, t_{i,2}]$, let $\Gamma_t$ denote the evolution by
mean curvature flow of the union of one
or two connected components of $\partial\left((s \circ p)^{-1}(t_i)\right)$ 
that were obtained from $\Gamma$
by the neck-caps replacement.
Then family 
$$(\Gamma_t\setminus B_1) \cup \Phi_1^{-1}(D^{1,1}_t \cup D^{2,1}_t)$$
is the desired family of surfaces parametrized by a small neighborhood
of vertex $v_{i,1}$ in the graph $G$. For components that belong
to the thick part (that is, the part that is not discarded in 
the MCF with surgery) we extend the family 
forward in time using MCF until they join up with 
families parametrized by edges $E^i_1,E^i_2,\ldots,E^i_{n_i}$.

We then repeat the above process for $t \in [t_{i,2}, t_{i,3}]$ focusing on the neck-caps replacements in $B_2$ and so on. 

\vspace{0.1in}
\textbf{Contracting discarded components.} As noted in Remark \ref{remark:bilip}, the discarded components are either a convex sphere of controlled geometry,  a capped-off chain of $\varepsilon$-necks, or an $\varepsilon$-loop and 
have areas $\rightarrow 0$ as $\varepsilon \rightarrow 0$. 

By coarea inequality we can cover each discarded
component $S$ by balls $B_l$, such that $B_l$
is $2$-bilipschitz diffeomorphic to a small ball in $\R^3$,
and $\partial B$ intersects $S$ in a finite union of small
loops. We start filling these loops by minimal discs
one by one
in a way similar to the procedure described above.
Eventually, we obtain a collection of
disjoint spheres each contained in a small ball
$B_l$. Then by Lemma \cite[Lemma 4.1]{CL2019} there exists a Morse  foliation contracting the sphere to a point.
The argument in Lemma \cite[Lemma 4.1]{CL2019} can be slightly modified to give a tree foliation instead of a Morse foliation.
\vspace{0.1in}

Finally, observe that the foliation is mean convex
on $[0,T]\setminus B$.
Moreover, we may assume that parameters of the
$(\mathbb {\alpha}, \delta, \mathbb{H})$-flow 
have been chosen so that $K_{t_i}^i \subset N_{\varepsilon/2}(K_{t_i}^{i-1})$
and so for $\sigma$ sufficiently small
we have that the set $B$ is $\varepsilon$-small.

\vspace{0.1in}
\textbf{End of the proof.} We now show how to finish the argument for Proposition \ref{MCF_tree} from its weaker version $(\ast)$. Given a geometrically prime or non-orientable geometrically manifold $N$ with large component $\Sigma$, we flow $\Sigma$ by smooth mean curvature flow for a short time $[0,t_0]$ and, by mean convexity, me may assume:
$$\Area(\Sigma_{t_0}) < \Area(\Sigma)-\alpha,$$
for some small real number $\alpha$. We then discard the region of $N$ bounded by $\Sigma$ and $\Sigma_{t_0}$, obtaining a new geometrically prime manifold $N'$ with large component $\Sigma_{t_0}$. Proposition \ref{MCF_tree} then follows by applying its weaker version $(\ast)$ to $N'$ with a choice of $\varepsilon <\alpha$.
\end{proof}

\begin{proof}[Proof of Theorem \ref{sharp_area}]
By \cite{White:bumpy} we can make an arbitrary small perturbation to make the metric on $M$ bumpy.
For an arbitrarily small $\eps>0$ we will construct a foliation for the perturbed bumpy metric,
so that the bounds on the area and diameter in the original metric are worse by at most $\eps$.

When $M$ is homeomorphic to $S^3$ we apply Theorem \ref{decomposition}
to decompose $M$ into a union of geometrically prime regions
that have an index 1 minimal sphere as their large boundary component.
From Proposition \ref{MCF_tree} we obtain a tree-foliation of each 
geometrically prime region by a family of 2-spheres. The areas
of these spheres are at most
$\frac{24 \pi}{ \Lambda_0}$ by Theorem \ref{area}.
We can assemble these foliations together to obtain a map 
$\tilde{f}: M \rightarrow G$. Note that $G$ must be a tree. 
Fix a map $g: G \rightarrow \R$, such that the restriction of $g$
to any edge is linear and non-singular. The map $\tilde{f} \circ g$
can then be perturbed to a Morse function from $M$ to $\R$
with desired properties. The details of this perturbation are postponed
until the proof of Theorem \ref{thmmaincompact},
where it is done in a more general setting.

If $M$ is homeomorphic to $S^2 \times S^1$ we consider
manifold $M' \cong S^2 \times [0,1]$ obtained by cutting $M$
along a minimal sphere that minimizes area in the isotopy class
of $S^2 \times \{0\}$. Applying Proposition \ref{MCF_tree}
as above we obtain a map $\tilde{f}: M' \rightarrow G$ onto a 
tree $G$ with vertices $v_1$, $v_2$ of $G$ corresponding to
two stable boundary spheres of $M'$.
%, $\tilde{f}^{-1}(v_1) \sqcup \tilde{f}^{-1}(v_2) = \partial M$.
Let $g: G \rightarrow S^1$ be a map with $g(v_1) = g(v_2)$
and such that the restriction of $g$ to each edge is non-singular.
A perturbation of $\tilde{f} \circ g$ then gives the desired
Morse function on $M$. 

Finally, case (c) follows by Proposition \ref{MCF_tree}.
%and Remark \ref{rk:heegaard}.
\end{proof}

\begin{proof}[Proof of Theorem \ref{thm: 1-sweepout}]
Let $f:M \rightarrow G$ be the map from $M$ to a graph $G$
from part (c) of Theorem \ref{sharp_area}.
Recall that graph $G$ has vertices of degree 
at most $3$.
%Consider graph $G = M / \sim $, where we identify points 
%in the same connected component of $f$: $x \sim y$ if there
%is a path between $x$ and $y$ contained in $f^{-1}(t)$ for some $t \in \R$.
%Map $f$ naturally induces map $\overline{f}: M \rightarrow G$.

\begin{figure}
\centering
\includegraphics[scale=0.5]{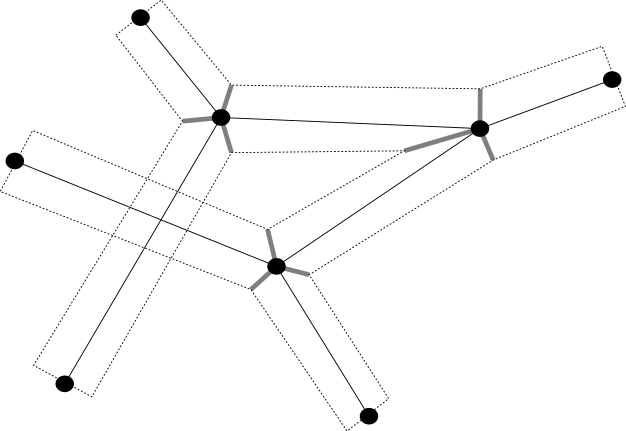}
\caption{Mapping $M$ into the neighborhood
of graph $G$ in $\R^2$.}
\label{fig:1cycles}
\end{figure}

Consider a general position smooth immersion of $G$ in $\R^2$. In particular, vertices are disjoint and 
every edge $E$ of $G$ in $\R^2$ intersects at most finitely many other edges.
Moreover, let $\{ U_{E_i} \}$ denote the set of $\varepsilon$-neighborhoods of
edges of $G$, then for $\varepsilon>0$ sufficiently small we may assume that
each point $x\in \R^2$ is contained in at most two distinct $U_{E_i}$, $U_{E_k}$
if $x$ lies at a distance $>\varepsilon$ from the vertices of $G$.
If $x \in B_{\varepsilon}(v)$ for some vertex $v$ of $G$, then it may 
lie in at most three sets $U_{E_{i_1}}, U_{E_{i_2}}, U_{E_{i_3}}$
corresponding to edges $E_{i_1}, E_{i_2}, E_{i_3}$ adjacent to 
the vertex $v$.

First we define map $F$ in the neighborhood
of a degree $3$ vertex of $G$. Let $T_v=l_1 \cup l_2 \cup l_3$
denote a ``tripod'', a union of three arcs emanating from 
vertex $v$ to the boundary of $\varepsilon$-neighborhood
of $G$, each $l_i$ bisecting the angle between two edges of $E$
(see Fig. \ref{fig:1cycles}).
Recall from the proof of Theorem \ref{MCF_tree}
that $f^{-1}(v)$ is a union of a surface $\Sigma$
and a disc $\Sigma_1$ intersecting $\Sigma$ in a closed curve
$\gamma$ that separates $\Sigma$, $\Sigma\setminus \gamma = \Sigma_2 \cup \Sigma_3$. 
For each $\Sigma_i$ there exists a Morse function $f_i: \Sigma_i \rightarrow [0,1]$
with $f_i^{-1}(0) = \gamma$ and satisfying
$$\textrm{length}(f_i^{-1}(t)) \leq C \sqrt{\Area(\Sigma_i)(\genus(\Sigma_i)+1)}+
\textrm{length}(\gamma)$$
for $t \in [0,1]$ (\cite{GAL}).
(Recall from the proof of 
Theorem \ref{MCF_tree} that the length 
of curve $\gamma$ can be assumed to be 
arbitrarily small).
Hence, we can define map $F$
from $\Sigma_1 \cup \Sigma$ to
the tripod $T$ with the desired
length bounds.

In an analogous way we define $F$
near all other vertices. It remains to
define an interpolation that will
take values in the neighborhood of 
each edge. Existence of such 
an interpolation, with controlled
length of pre-images, follows 
by Proposition 4.3 and Theorem 4.2
from \cite{LiZh18}. 

We now discuss the value of constant $C$.
Surfaces $\{f^{-1}(x)\}$ have
areas bounded by $\frac{32 \pi}{\Lambda}$
and genus $\leq 2$. If we plug that into the estimate 
from Theorem 4.2 in \cite{LiZh18}
for parametric sweepouts, keeping in mind
that graph $G$ can have double points in $\R^2$,
we obtain that the length is bounded by
$\textrm{length}(F^{-1}) \leq \frac{70000}{\sqrt{\Lambda}}$. A better estimate can be obtained if we cut
genus 2 surface along short curves and then apply a better bound for the lengths of curves in a sweepout of a surface that is diffeomorphic to a sphere
with holes from \cite{L14}. 

We briefly describe how to cut 
a genus 2 surface $\Sigma$.
First we find a non-contractible
curve $\gamma$ of length at most
$\sqrt{\frac{2}{\sqrt{3}}}\sqrt{Area(\Sigma)}$
(\cite{KaSa06}). Cutting $\Sigma$
along $\gamma$ we obtain either one
or two genus 1 surfaces and glue in two
spherical caps along $\gamma$
each of area $\textrm{length}(\gamma)^2 / 2 \pi$.
We then subdivide the resulting surface 
(or surfaces) using systolic inequality 
for the torus and slide the subdividing 
curve (or curves) into the interior of the
original surface $\Sigma$ without affecting its
length. In the end we obtain one or two surfaces with boundary diffeomorphic to a sphere with holes. 
%$2 \sqrt{\frac{2}{\sqrt{3}}} \sqrt{\Area(\Sigma)/2+ \frac{1}{2\pi}\frac{2}{\sqrt{3}}}+2 \sqrt{\frac{2}{\sqrt{3}}} \sqrt{\Area(\Sigma)} $
Using the bounds from \cite{L14}
we obtain that there exists a sweepout of $\Sigma$
with curves of length $< \frac{1000}{\sqrt{\Lambda_0}}$.
By Proposition 4.3 \cite{LiZh18} we can construct
a family of sweepouts for the 1-parameter family of 
surfaces corresponding
to an edge of $G$
with lengths $< \frac{2000}{\sqrt{\Lambda_0}}$.
Hence, we can take $C = 4000$.
\end{proof}

\section{Diameter bounds and the proof of Theorem \ref{thmmaincompact}}
\subsection{Diameter bounds for level sets of the distance function.}
The proof of Theorem \ref{thmaux} will be done in several steps. The starting point is to note that by Theorem \ref{area} and Corollary \ref{diameter2}, all the boundary components of a geometrically prime manifold $N$ have diameter at most $\sqrt{\frac{2}{3}}\frac{4\pi}{\sqrt\Lambda_0}$, and area at most $\frac{32\pi}{\sqrt{\Lambda_0}}$. The next step then is to prove that a connected surface $S$ trapped between two equidistant surfaces of comparable radii, $d(x, \Sigma) \in \left[s,s + \rho\right]$ for $x \in S$, has bounded diameter.
This follows by a modification of an argument in Gromov-Lawson \cite[Corollary 10.11]{GrLa83}.

\begin{lemma}\label{lemma:trapped}
Let $M$ be a geometrically prime or a non-orientable geometrically region with large boundary
component $\Sigma$ of diameter $D$.
Let $N_s(\Sigma) = \{x \in M : d(x,\Sigma) \leq s\}$ and suppose $S$ 
is a connected subset of $M$ with
$S \subset N_{s+ \rho}(\Sigma) \setminus N_s(\Sigma)$.
%, where $\rho=\frac{6\pi}{\sqrt{\Lambda_0}}$. 
Then, 
$$\diam(S) \leq \max\{ D+ \sqrt{\frac{2}{3}}\frac{2\pi}{\sqrt{\Lambda_0}}+ 2 \rho, \sqrt{\frac{2}{3}} \frac{12\pi}{\sqrt{\Lambda_0}}+2 \rho \}$$
\end{lemma}

\begin{proof}
Assume that $s \geq \sqrt{\frac{2}{3}}\frac{2\pi}{\sqrt{\Lambda_0}}+ \varepsilon'$.
Note that otherwise we have $\diam(S) \leq  D+ \sqrt{\frac{2}{3}}\frac{2\pi}{\sqrt{\Lambda_0}} +2 \rho + 2 \varepsilon'$.
We will show that the conclusions of the lemma hold under this assumption for every small 
$\varepsilon'>0$ and hence the lemma follows.

Let $x,y$ be two points in $S$. Consider $p_x$ and $p_y$ the points in $\Sigma$ closest to $x$ and $y$, respectively, and let $\gamma_x$ and $\gamma_y$ denote the corresponding minimizing geodesics. Let $\gamma$ denote a curve in $S$
connecting $x$ and $y$ and $\sigma$ denote a curve in $\Sigma$ connecting $p_x$ and $p_y$.

Since $M$ is geometrically prime we have that
every closed surface $\Gamma$ in $M$ is homologous
to a cycle in the boundary of $M$.
Thus, by Poincare-Lefschetz duality
if a closed curve $z$ lies in the interior of $M$,
then it represents a trivial element of $H_1(M, \partial M)$.
Moreover, since all boundary components of $M$
except for $\Sigma$ are spheres we can choose a filling
$Z$ of $z$ with $\partial Z-z \subset \Sigma$.
 
Let $l=\sigma \cup \gamma_x \cup \gamma \cup \gamma_y$
and minimize in the class of surfaces filling $l$
in $(M, \Sigma)$. We obtain a stable minimal surface
$Q$ with $l \subset \partial Q$ and $\partial Q \setminus l \subset \Sigma$.

Choose $\varepsilon \in (0, \varepsilon')$ and 
let $\rho_\varepsilon =\sqrt{\frac{2}{3}} \frac{2 \pi}{\sqrt{\Lambda_0}} + \varepsilon $. Consider surface $\partial N_{s-\rho_\varepsilon }( \Sigma)$.
Without any loss of generality we may assume that the intersection
$\partial N_{s - \rho_\varepsilon}( \Sigma) \cap Q$ is transverse.
Since the distance from $\Sigma$ is a monotonically increasing
function along $\gamma_x$ and $\gamma_y$ we have that
there is a unique connected arc $\alpha$ of 
$\partial N_{s - \rho_\varepsilon}( \Sigma) \cap Q$
connecting $\gamma_x$ to $\gamma_y$. 
Let $p$ denote the point on $\alpha$ that lies at an equal distance
from $\gamma_x$ and $\gamma_y$.

By Theorem \ref{diameter1} we have $d(p, \partial Q) \leq \sqrt{\frac{2}{3}} \frac{2 \pi}{\sqrt{\Lambda_0}}$. Since the distances from $p$ to 
$\gamma$ and $\Sigma$ are larger than that, it follows that
$p$ lies at a distance at most $\sqrt{\frac{2}{3}} \frac{2 \pi}{\sqrt{\Lambda_0}}$ 
from both $\gamma_x$ and $\gamma_y$. Let $x_1$ and $y_1$ denote the closest
points to $p$ on $\gamma_x$ and $\gamma_y$ respectively.
Since $\gamma_x$ is a minimizing geodesic we have
$$d(\Sigma, x_1)+ d(x_1, p) \geq s - \rho_\varepsilon$$
It follows that $d(\Sigma, x_1) \geq  s - \sqrt{\frac{2}{3}}\frac{4 \pi}{\sqrt{\Lambda_0}}+ \varepsilon$. Hence, $d(x_1, x) \leq \frac{4 \pi}{\sqrt{\Lambda_0}}+ \rho + \varepsilon$ and we have the same
inequality for $y_1$ and $y$. We conclude that
$$d(x,y) \leq \sqrt{\frac{2}{3}} \frac{12 \pi}{\sqrt{\Lambda_0}} + 2 \rho + 2 \varepsilon$$
Since we can choose $\varepsilon>0$
to be arbitrarily small this concludes the proof.
\end{proof}

\subsection{Local tree foliation}
We need to modify the mean curvature flow
construction from Theorem \ref{MCF_tree}
so that we have a diameter bound in addition to area and genus bounds. We do this by 
cutting surfaces from the tree foliation into smaller pieces and gluing in stable minimal discs. 

%By ``tripod tree'' in the following lemma we mean a connected graph with three edges and 4 vertices, one of which
%has degree 3.
%In this section, we will state a technical result that will be used in proving that we can do this while keeping control of the genus. 

%Let $U$ be an open set with smooth boundary and $\Sigma \subset \partial U$ a closed (possibly disconnected) surface.  
% \cite[Theorem 4.2]{EcHu91})

\begin{lemma} \label{lem:local foliation}
Let $U\subset M$ be a mean convex region, $\Gamma \subset \partial U$ 
be a connected strictly mean convex surface.
Let $S$ be a stable minimal disc
with $\partial S \subset \Gamma$.
For all sufficiently small $\eps>0$ there exists 
a closed set $U'$, $\Gamma \cup S \subset U' \subset U$
homeomorphic to the closure of the  $\eps$-neighbourhood $\overline{N_{\eps}(\Gamma \cup S)}$,
  tree $T$ and a mean convex tree foliation $f: U' \rightarrow T$ with the following area bounds:
\begin{enumerate}
    \item if $\partial S$ is separating in $\Gamma$,
    then $\Area(f^{-1}(x)) \leq \Area(\Gamma) + \Area(S) $ for all $x \in T$
    and $\Area(f^{-1}(x)) \leq \max\{\Area(\Gamma_1), \Area(\Gamma_2)\}  + \Area(S)$ for 
    a terminal vertex $x$, where $\Gamma_i$, $i=1,2$, denotes connected components
    of $\Gamma \setminus \partial S$;
    \item if $\partial S$ is non-separating in $\Gamma$,
    then $\Area(f^{-1}(x)) \leq \Area(\Gamma) + 2\Area(S) $ for all $x \in T$.
\end{enumerate}

%$\Area(f^{-1}(x)) \leq \Area(\Gamma)$ for all $x \in T$. 
%Moreover, $\partial U' \setminus \Gamma$ is a collection 
%of spheres with outward pointing mean curvature of area  $< \Area(\Gamma)$.
%If $S$ separates $U$ into two disjoint sets $U_1$ and $U_2$, then
%each connected component of $\partial U' \setminus \Gamma$ is contained
%in exactly one of these sets.
\end{lemma}

\begin{proof}
Let $\Gamma$ correspond to the root vertex of tree $T$.
%Let $\{ \gamma_i = \partial S_i \}$ denote the connected components of $\partial S$.
To define a tree foliation we start by flowing surface $\Gamma$ by mean curvature
flow for some very short time $t \in [0,t_1]$. For $t_1$ sufficiently small we have that 
$\gamma=\Gamma_{t_1} \cap S$ is a smooth closed curve.
Let $S' \subset S$ be the minimal disc bounded by $\Gamma_{t_1} \cap S$ and we let
$\Gamma_{t_1} \cup S'$ be the surface corresponding to a degree $3$ vertex of 
the parametrization tree. 
Consider a piecewise smooth immersed surface $\Gamma^1$ 
obtained from $\Gamma_{t_1}$ by cutting it along
$\gamma_1$ and gluing in two copies of $S_1'$. 
If $\partial S$ was separating in $\Gamma$, then $\Gamma_{t_1}$ is the union of two 
piecewise smooth connected immersed surfaces, otherwise it is one connected immersed surface.

Applying mean curvature flow to $\Gamma^1$ will give us a monotone deformation that immediately makes the surface smooth and mean convex (see \cite[Theorem 4.2]{EcHu91}).
If the flow is applied for sufficiently short time we have that that the surfaces are contained
in a small tubular neighbourhood of $\Gamma \cup S$. The area bound follows by the properties
of mean curvature flow.
\end{proof}

\subsection{Existence of tree foliation with controlled area, genus and diameter.}

\begin{theorem}\label{thmaux}
Let $(N^3,h)$ be a geometrically prime or non-orientable geometrically prime
3-manifold of positive scalar curvature $R_h\geq \Lambda_0>0$. Suppose the large connected component $\Sigma$ 
of $\partial N$ is a minimal surface of 
Morse index $1$ and genus $g \leq 2$.
%has genus $g$ and area $\leq \frac{32 \pi}{\Lambda_0}$.
Then, there exists a tree foliation $\{ \Si_x \}_{x \in G}$
such that for any $x\in G$:
\begin{itemize}
	\item[(a)] $\Sigma_x$ has genus at most $g$;
	\item[(b)] $\Area(\Si_x) < \frac{72\pi}{{\Lambda_0}}$ if $N$ is geometrically prime 
	and $\Area(\Si_x) < \frac{84\pi}{{\Lambda_0}}$ if $N$ is non-orientable geometrically prime;
	\item[(c)] $\diam (\Si_x) \leq \sqrt{\frac{2}{3}} \frac{26\pi}{\sqrt{\Lambda_0}}$.
\end{itemize}

%graph
%tree
%$\Gamma = (V, E)$ and a proper map $f:N\rightarrow \Gamma$ such that:
%
%\begin{itemize}
%	\item[(a)]  For every $p\in\Gamma$, $|f^{-1}(p)| \leq \frac{32\pi}{{\Lambda_0}}$ and  ${\rm diam}_N(f^{-1}(p)) \leq \frac{40\pi}{\sqrt{\Lambda_0}}$;
%	\item[(b)] $\Gamma$ is an oriented tree;
%	\item[(c)] The pre-image $f^{-1}(v)$ of each vertex $v\in V$ is either a boundary component of $N$ or a surface with corners in $int(N)$ consisting of the piecewise union of mean-convex and area-minimizing pieces.  
%	%\item[(d)] The union of all pre-images of the edges $e$ in $E$ gives a smooth mean-convex foliation %of $M\setminus f^{-1}(V)$ whose mean curvature vector orientation agrees with the orientation of %$\Gamma$.
%\end{itemize}
\end{theorem}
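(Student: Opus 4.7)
The plan is to start with the tree foliation $\{\Sigma_x\}_{x\in G}$ produced by Proposition \ref{MCF_tree}, which already controls genus by $g$ and area by $\Area(\Sigma) \leq 32\pi/\Lambda_0$ (the area bound coming from Theorem \ref{area}(b), since the large component $\Sigma$ has Morse index $1$), and is mean-convex outside an $\varepsilon$-small set $B\subset [0,T]$. The remaining task is to refine this foliation so that every leaf additionally has extrinsic diameter bounded by $40\pi/\sqrt{\Lambda_0}$, at the cost of inflating area and genus by a uniform factor.

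The key geometric input is Lemma \ref{lemma:trapped}: any connected surface contained in a slab $\{s_0 \leq d(\cdot,\Sigma) \leq s_0 + \rho\}$ of width $\rho = 10\pi/\sqrt{\Lambda_0}$ between two equidistant surfaces to the large boundary component has diameter at most $40\pi/\sqrt{\Lambda_0}$, and by Remark \ref{rmk:otherboundary} the same holds for slabs equidistant to the stable spherical boundary components. So it suffices to modify the tree foliation so that every leaf lies in one such slab. I would consider the equidistant levels $s_k = k\rho$ and, whenever a leaf $\Sigma_x$ (taken outside the small set $B$, where $\Sigma_x$ is mean-convex) crosses one of these levels, cut it along the curves $\gamma = \Sigma_x \cap \Sigma_{s_k}$. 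Applying Proposition \ref{prop: genus and area} with $U$ the mean-convex region bounded by $\Sigma_x$, $\Gamma = \Sigma_x$, and $\gamma$ the cutting curves yields stable minimal surfaces $S_i$ with $\partial S_i = \gamma_i$. By positive scalar curvature each $S_i$ is a stable disc or sphere and, by Corollary \ref{diameter2}(a), has diameter at most $2\pi/\sqrt{\Lambda_0}$, hence sits close to $\Sigma_{s_k}$.

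The tree $G$ is enlarged by inserting, at each parameter $x$ where a cut occurs, a short subtree whose edges realize the parametric cut-and-paste procedure: the two parallel copies of each $S_i$ are slid apart one curve at a time, exactly in the spirit of the neck-caps replacement in the proof of Proposition \ref{MCF_tree}, so that only vertices of degree at most $3$ are introduced and the intermediate leaves remain connected. By part (1)-(2) of Proposition \ref{prop: genus and area}, the intermediate leaves $\Sigma_j$ satisfy $\Area(\Sigma_j) \leq 3\Area(\Sigma) \leq 96\pi/\Lambda_0$ and $\genus(\Sigma_j) \leq 3g$, yielding (a) and (b). After all cuts at all levels $s_k$ have been performed, each final connected leaf is either one of the stable minimal pieces $S_i$ sitting near some $\Sigma_{s_k}$ (and therefore in a slab of width $\rho$), or a piece of the original leaf trapped between two consecutive levels $\Sigma_{s_k}$ and $\Sigma_{s_{k+1}}$; in both cases Lemma \ref{lemma:trapped} gives (c).

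The main obstacle is the parametric bookkeeping: the cuts must be inserted into $G$ as a sequence of degree-$3$ vertices with the intermediate leaves remaining connected (so each cut is coupled to a connected piece of $\Sigma_x$ joined to one copy of $S_i$, mimicking the definition of a tree foliation vertex), the cuts at successive levels $s_k$ must be organized so as not to compound beyond the factor $3$ coming from Proposition \ref{prop: genus and area}, and one must choose the parameter $\varepsilon$ in Proposition \ref{MCF_tree} small enough so that the exceptional set $B$ where mean-convexity fails does not meet the equidistant levels $s_k$ and the distance function can be used to slice the foliation cleanly.
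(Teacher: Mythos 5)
Your plan uses the same three ingredients as the paper (Proposition \ref{MCF_tree}, Lemma \ref{lemma:trapped}, Proposition \ref{prop: genus and area}), but two essential steps are missing, and they are precisely where the work lies. First, you never explain how the foliation continues past a cut. Cutting a leaf $\Sigma_x$ along $\gamma$ and gluing in the stable minimal surfaces $S_i$ produces new closed surfaces that are \emph{not} leaves of the original tree foliation, and the later leaves of that foliation do not restrict to closed surfaces inside the cut-off regions, so you cannot ``modify'' the given foliation and keep it elsewhere. The paper instead restarts the construction from the cut-and-pasted surface: since $C(U,\varepsilon,\Gamma,S)$ is only piecewise smooth, with corners and with minimal (not strictly mean convex) pieces, it is first evolved for a short time by mean curvature flow (Ecker--Huisken, \cite{EcHu91}), one glued piece at a time, to obtain a smooth embedded strictly mean convex surface; the regions it bounds are then new geometrically prime regions, the near ones foliated by a fresh application of Proposition \ref{MCF_tree} (their leaves sit in a slab, so Lemma \ref{lemma:trapped} applies) and the far ones handled by an inductive hypothesis. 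Without this restart-and-smooth step your ``short subtree'' only covers the instant of the cut, not what comes after.

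Second, you name the compounding of the factor $3$ as an obstacle but do not resolve it, and it does not resolve itself: if you cut leaves whose area and genus are already inflated, the bounds degrade at every level $s_k$. The paper's fix is an induction on the distance from $\Sigma$ together with the refined conclusions (3)--(4) of Proposition \ref{prop: genus and area}: the component of the cut surface lying on the far side has area and genus bounded by those of the \emph{original} large component, so each deeper stage starts with the same budget and the factor $3$ (resp.\ $2$) is only ever incurred transiently (resp.\ on the near side, which is foliated directly and never cut again). Setting this up requires organizing the construction around the new geometrically prime regions rather than around individual leaves, which is the structural difference between your sketch and the actual proof. A smaller quantitative point: with cuts at levels spaced by $\rho=10\pi/\sqrt{\Lambda_0}$ and glued minimal surfaces overhanging the level by up to $2\pi/\sqrt{\Lambda_0}$ (Theorem \ref{diameter1}), the resulting pieces sit in slabs wider than $\rho$, so Lemma \ref{lemma:trapped} no longer yields $40\pi/\sqrt{\Lambda_0}$; the paper avoids this by cutting at depth increments of roughly $5\pi/\sqrt{\Lambda_0}$, keeping the near pieces within width $7.5\pi/\sqrt{\Lambda_0}$.
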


\begin{proof}

\textbf{1. Orientable case.}
Assume $N$ is geometrically prime.
Suppose $\Sigma$ is a surface of genus $g$, $0 \leq g \leq 2$.
If $g>0$, then we start by cutting $\Sigma$ by minimal discs to reduce its genus to $0$.

By Proposition \ref{free boundary disc} we can find a free boundary stable minimal disc 
$D$ with $\partial D \subset \Sigma$.
Applying Lemma \ref{lem:local foliation} we can define
a tree foliation of a small neighbourhood $U'$ of $\Sigma \cup D$, so that
$\Sigma^1 = \partial U' \setminus \Sigma$ is a surface of genus $g-1$. Performing
this procedure at most two times we obtain a mean convex
sphere $\Sigma^2$.

Observe that by the area bounds
for free boundary stable minimal discs Theorem \ref{area free boundary} and  Lemma \ref{lem:local foliation} we have
$$\Area(\Sigma^2) < \frac{32 \pi}{\Lambda_0}+ 4 \frac{4 \pi}{\Lambda_0} \leq \frac{48 \pi}{\Lambda_0}.$$
To bound the diameter, suppose $x,y \in \Sigma \cup D$. It is easy to see that by Theorem \ref{diameter1} and Corollary \ref{diameter2} $$dist(x,y) \leq \diam(\Sigma) + \sqrt{\frac{2}{3}} \frac{4 \pi}{\sqrt{\Lambda_0}} \leq \sqrt{\frac{2}{3}} \frac{8 \pi}{\sqrt{\Lambda_0}}$$
Applying this twice we obtain that for $\delta>0$ that can be taken to be arbitrarily small for sufficiently small $\eps$
we have

$$\diam(\Sigma^2) \leq \sqrt{\frac{2}{3}} \frac{12 \pi}{\sqrt{\Lambda_0}}+ \delta$$
%After a small inward perturbation we may assume that $\Sigma_2$
%is mean convex (\cite[Theorem 4.2]{EcHu91}). 
(Recall that $\diam$ is the extrinsic diameter.)

Let $\tilde{N}$ denote the subset of $N$ bounded
by mean convex sphere $\Sigma^2$ and the union of stable minimal spheres.

Given a subset $X \subset \tilde{N}$ let 
$$D(X) = \max\{dist(x, \Sigma^2): x \in X\}- \min \{dist(x, \Sigma^2): x \in X\} $$
Pick a minimal covering of $\tilde{N}$ by balls $\mathcal{B}=\{ B_k \}$ of radius $\frac{\pi}{4 \sqrt{\Lambda_0}}$.
Let $N' \subset \tilde{N}$ be a geometrically prime 3-manifold contained in $\tilde{N}$ with
mean convex sphere $\Sigma'$ as the large boundary component and satisfying
\begin{enumerate}
    \item $\Area(\Sigma') \leq \Area(\Sigma^2)$;
    \item $D(\Sigma') < \sqrt{\frac{2}{3}} \frac{6 \pi}{\sqrt{\Lambda_0}}- 2\delta$.
\end{enumerate}
We will prove that every $N' \subset \tilde{N}$ as above
admits a tree foliation, satisfying bounds (a), (b), (c). The proof is by induction on the number of balls in $\mathcal{B}$ necessary to cover $N'$.
 Observe that if $N'$ is contained in 
one ball of the covering, then we can apply Proposition \ref{MCF_tree} to obtain the desired tree 
foliation of $N'$. 

For a fixed $n \in \mathbb{N}$ assume that the tree foliation exists for all $N'$
as above that can be covered by at most $n-1$ balls from $\mathcal{B}$.
Fix $N'$ that has a minimal covering by $n$ balls from $\mathcal{B}$.

Apply Proposition \ref{MCF_tree} to $N'$
to obtain a tree foliation $\{\Sigma_x\}_{x \in G}$,
with the corresponding maps $p:N \rightarrow G$
and $s: G \rightarrow [0,T]$ with fibers of controlled
area.
Observe that $\tilde{D}(x) = D(\Sigma_x)$ is continuous on the interior of every
edge $E$ of $G$. Also, given a vertex $v$ of $G$, an adjacent
edge $E_1$ directed towards $v$ and adjacent edge $E_2$
directed away from $v$ we have 
$$ \lim_{E_2 \ni t \rightarrow v}  \tilde{D}(t) \leq \lim_{E_1 \ni t \rightarrow v} \tilde{D}(t)+
\varepsilon,$$
for an $\varepsilon \in (0, \delta)$ that can be chosen to be small. This follows since each vertex in the construction
of $G$ from the proof of Proposition \ref{MCF_tree}
corresponds to gluing a very small disc in
a cylindrical region. If $r<\varepsilon$
denotes the radius of the disc, then the surface obtained 
by performing a surgery along the disc will 
 have points that are at most distance  $r$
further away from $\Sigma^2$.

If $\tilde{D}(G) \subset [0, \sqrt{\frac{2}{3}} \frac{6\pi}{\sqrt{\Lambda_0}}-2\delta]$, then we can apply Lemma
\ref{lemma:trapped} with $\rho = \sqrt{\frac{2}{3}}\frac{6\pi}{\sqrt{\Lambda_0}}-2\delta$
to obtain 
\begin{equation} \label{eq:diameter bound}
    \diam(\Sigma_x) < \sqrt{\frac{2}{3}} \frac{26\pi}{\sqrt{\Lambda_0}} -\delta
\end{equation}
Thus, without any loss of generality we may assume that $\tilde{D}(x)> \sqrt{\frac{2}{3}} \frac{6\pi}{\sqrt{\Lambda_0}}-2\delta$
for some $x \in G$.
For some $\delta' \in (\varepsilon, 2 \varepsilon)$ to be picked later, let $G_1$ denote the connected component of 
$\tilde{D}^{-1}([0, \sqrt{\frac{2}{3}}\frac{6\pi}{\sqrt{\Lambda_0}}-2 \delta- \delta'])$
that contains the root vertex $q$ of $G$. We choose $\delta'$ so that vertices of $G$ do not lie
in the boundary of $G_1$.
By construction, we have that $N' \setminus p^{-1}(G_1)$ is a disjoint union of
geometrically prime 3-manifolds $V_1, ..., V_k$
with corresponding large boundary components 
$\Gamma_1, ..., \Gamma_k$.
By Lemma \ref{lemma:trapped} the diameters of $\Sigma_x$ for
$x \in G_1$ satisfy bound (\ref{eq:diameter bound}).

Hence, it is enough to prove existence of tree foliations of $V_i$, for $i=1,...,k$, with the desired bound on genus, area and diameter.

Fix $i$. 
%Let $x$ and $y$ denote points 
%of $\Sigma_i$ that attain the
%maximal and minimal distances:
%$$d(x,\Sigma)=\max_{\Sigma_i} d(\cdot,\Sigma),\quad d(y,\Sigma)=\min_{\Sigma_i} %d(\cdot,\Sigma).
By construction we have that $D(\Gamma_i)= \sqrt{\frac{2}{3}}\frac{6\pi}{\sqrt{\Lambda_0}}-2 \delta -\delta'$.
Let $d_i = dist(\Gamma_i, \Sigma^2)$ and define $\gamma = \{x \in \Gamma_i: \,\, d(x,\Sigma^2)=r'\}$, where $r' \in [d_i+ \sqrt{\frac{2}{3}}\frac{3\pi}{\sqrt{\Lambda_0}}- \delta',  d_i+\sqrt{\frac{2}{3}}\frac{3\pi}{\sqrt{\Lambda_0}}]$ is chosen 
so that $\gamma$ is a finite
collection of closed curves. Let $S = \cup_{j=1}^m S_j \subset N_i$ denote the disjoint union of area minimazing discs stable minimal discs
with $\partial S = \gamma$. By Lemma \ref{lem:local foliation} 
 a mean convex tree foliation of a small 
neighbourood $U$ of $\Gamma_i \cup S$ with $\partial U \setminus \Gamma_i = \bigcup \Sigma_l$. 

By Lemma \ref{diameter1}  
for every $x \in S$ we have that
$d(x, \gamma) \leq \sqrt{\frac{2}{3}}\frac{2\pi}{\sqrt{\Lambda_0}}$. 
Hence, by Lemmas \ref{lem:local foliation} and \ref{lemma:trapped} it follows that each connected component $\Sigma_l$ 
 satisfies
\begin{align*}
        \Area(\Sigma_l) & \leq \Area(\Gamma_i) \leq \Area(\Sigma^2)\\
        D(\Sigma_l)< &  \sqrt{\frac{2}{3}} \frac{5 \pi}{\sqrt{\Lambda_0}}  \\
    \diam(\Sigma_l) & \leq \sqrt{\frac{2}{3}} \frac{24\pi}{\sqrt{\Lambda_0}}
\end{align*}
Let $U_l \subset V_i$ denote the geometrically prime 3-manifold
with large boundary component $\Sigma_l$. We claim that the number of balls from
$\mathcal{B}$ necessary to cover $U_l$ must be smaller 
than $n$. Indeed, by construction there exists a point
$x \in \Gamma_i \setminus U_l \subset V_i$
with $dist(x, U_l)> \frac{\pi}{\sqrt{\Lambda_0}}-\delta'>\frac{\pi}{2\sqrt{\Lambda_0}}$. In particular,
we can remove a ball $B'$ containing $x$ from the covering of $U_l$. 
Hence, we can apply the inductive assumption to $U_l$ to obtain
a mean convex tree foliation with the desired diameter bounds. The worst case for the area bound is
for surfaces in the foliaiton that correspond to degree 3 vertices with area bounded by
$\frac{3}{2}\Area(\Sigma^2) \leq \frac{72\pi}{{\Lambda_0}}$.

\textbf{2. Non-orientable case.}
Now suppose $N$ is non-orientable geometrically prime.
Our argument is similar to the orientable case.
We deform the two-sided index 1 projective plane $\Sigma$,
pushing it to the inside using the first eigenfunction of
the Jacobi operator to obtain a mean convex surface $\Sigma^2$.
We pick a minimal covering $\mathcal{B}=\{B_k\}$ of $N$
by balls of radius $\frac{\pi}{4 \sqrt{\Lambda_0}}$ and prove existence of the desired tree foliation
for all non-orientable geometrically prime subset $N' \subset N$ with two-sided mean
convex projective plane $\Sigma'$ as the large boundary component satisfying
\begin{enumerate}
    \item $\Area(\Sigma') \leq \Area(\Sigma^2)$;
    \item $D(\Sigma') < \sqrt{\frac{2}{3}} \frac{6 \pi}{\sqrt{\Lambda_0}}- 2 \delta$.
\end{enumerate}
The proof proceeds by induction on the minimal number of balls in $\mathcal{B}$
necessary to cover $N'$. For a fixed integer $n$ assume that the tree foliation exists for all $N'$ that can be covered by at most $n-1$ balls. Fix $N'$ that has a minimal covering by $n$ balls in $\mathcal{B}$. As in the orientable case we can define a tree foliation of $N'$ with the desired control on the area, but possibly not on the diameter.
%If $D(\Sigma_x)$ becomes larger than $\sqrt{\frac{2}{3}}\frac{6\pi}{\sqrt{\Lambda_0}}$ for some surfaces in the foliation, then as in the orientable case we can find 3-manifolds 
Exactly as in the orientable case we can reduce the problem to constructing a foliation with diameter and area bounds of 3-manifolds $V_1, ..., V_k \subset N'$
with corresponding large boundary components 
$\Gamma_1, ..., \Gamma_k$ with $D(\Gamma_i)= \sqrt{\frac{2}{3}}\frac{6\pi}{\sqrt{\Lambda_0}}-\delta'$ and $\Area(\Gamma_i) < \Area(\Sigma)$. Moreover, it follows from the construction that exactly one of these manifolds (say, $V_1$) is non-orientable geometrically prime with $\Gamma_1 \cong \RP^2$ and all other $V_i$ are geometrically prime with $\Gamma_i \cong S^2$. To define the tree foliation of $V_i$, $i>1$, we proceed as in the orientable case. For 
$V_1$ define a collection of simple closed curves $\gamma \subset \Gamma_1$, $\gamma = \{x \in \Gamma_i: \,\, d(x,\Sigma^2)=r'\}$, where $r' \in [d_1+ \sqrt{\frac{2}{3}}\frac{3\pi}{\sqrt{\Lambda_0}}- \delta',  d_1+\sqrt{\frac{2}{3}}\frac{3\pi}{\sqrt{\Lambda_0}}]$ and $d_1 = dist(\Gamma_1, \Sigma^2)$. Since $\gamma$ is separating in $\Gamma_1$ every connected component $\gamma_j$ of $\gamma$ bounds a disc $D_j$ on one side (and Mobius band on the other side) inside $\Gamma_1$. Let $C_j$ denote a stable minimal disc in $V_1$ obtained by minimizing area in the isotopy class of $D_j$ inside $V_1$ with fixed boundary $\gamma_j$. Discs $C_j$ obtained this way will all be disjoint. We apply Lemma \ref{lem:local foliation} to define a local tree foliation correponding to successively cutting $\Gamma_1$ by discs $C_j$.

In the end we obtain a collection of 2-spheres $\{S_l \}$ bounding geometrically prime regions and a projective plane $\Gamma_1'$ bounding a non-orientable geometrically prime region $V_1'$. By construction we have that $\Area(\Gamma_1') \leq \Area(\Gamma_1) \leq \frac{28\pi}{\Lambda_0}$ and $V_1'$ can be covered by at most $(n-1)$ balls from collection $\mathcal{B}$. By inductive assumption we obtain the desired foliation of $V_1'$. For a sphere $S_l$ bounding a geometrically prime region $W_l$ we observe that $\Area(S_l) < 2 \Area(\Gamma_1) \leq \frac{56\pi}{\Lambda_0}$. We then apply the argument for the orientable case to define the tree foliation for each $W_l$. The worst area bound will be for the case of a degree $3$ vertex corresponding to a sphere of area $\leq \frac{56\pi}{\Lambda_0}$ and a minimizing disc with area at most half of that. Hence, we obtain bounds 
\end{proof}

\begin{proof}[Proof of Theorems \ref{thmmaincompact}]
%and \ref{thmmaincompact2}]
By \cite{White:bumpy} we may assume that
the metric on $M$ is bumpy. By Theorem \ref{decomposition} there exists a collection
of minimal surfaces $\bigcup S_k$, so that
$M \setminus \bigcup S_k$ is a union of  
geometrically prime or non-orientable geometrically prime manifolds $\{N_i\}$.
For each $i$, the large boundary component $\Sigma_i$ of $N_i$ 
is an index $1$ minimal surface of genus 
at most $g$. We have that $g$ is bounded by the maximal Heegaard genus
of prime manifolds in the prime decomposition of $M$, 
so as explained the proof of Theorem \ref{decomposition}
%by Remark \ref{rk:heegaard} 
we have $g \leq 2$.
By Theorem \ref{area},  
$\Area(\Sigma_i)\leq \frac{32 \pi}{\sqrt{\Lambda_0}}$ for orientable components
and $\Area(\Sigma_i)\leq \frac{28 \pi}{\sqrt{\Lambda_0}}$ for non-orientable components.
Hence, there exists a tree foliation $\{\Sigma_x \}_{x \in G_i}$ of $N_i$ with bounds
on area, diameter and genus as in Theorem \ref{thmaux}.
Let $p_i: N_i \rightarrow G_i$ denote the corresponding maps.

Define graph $G = \bigcup G_i / \sim$, where we 
identify vertices $G_i \ni v \sim w \in G_j$
if $p_i^{-1}(v) = p_j^{-1}(w) \subset N$. 
Define map $P: M \rightarrow G$ by setting
$P = p_i$ on the interiors of $N_i$ and extend the map
to $\bigcup S_k$. We have that $P$ and $G$
have the following properties:
\begin{enumerate}
    \item $P^{-1}(x)$ has genus at most $g \leq 2$;
%    , or at most $0$ if $M \cong S^2 \times S^1$.
%$\max\{3g,1\}$ 
%    and at most $3g$ if $M$ doesn't contain embedded   $\RP^2$.
	\item $\Area(P^{-1}(x)) < \frac{56\pi}{{\Lambda_0}}$;
	\item $\diam (P^{-1}(x)) \leq \sqrt{\frac{2}{3}} \frac{26\pi}{\sqrt{\Lambda_0}}$.
    \item For each edge $E \subset G$ the family $\{P^{-1}(t)\}_{t \in \mathring{E}}$
    gives a smooth foliation of $f^{-1}(\mathring{E})$.
    \item $G$ has vertices of degree 1, 2, or 3, moreover
\begin{itemize}
    \item at each vertex of degree $2$ or $3$,
    $P$ satisfies the description from Definition \ref{def: tree};
    \item at vertices of degree 1, $P^{-1}(v)$
    is either a point or minimal stable $\RP^2 \subset M$.
    In the first case surfaces $P^{-1}(t)$, $E \ni t \rightarrow v$, are spheres shrinking to a point;
    in the second case, $P^{-1}(t) \rightarrow P^{-1}(v)$
    %$E \ni t \rightarrow v$, 
    is a two-sheeted smooth convergence.
\end{itemize}    
\end{enumerate}
%This finishes the proof of Theorem \ref{thmmaincompact2}.

To prove Theorem \ref{thmmaincompact}
we choose a smooth function
$s: G \rightarrow \R$ in general position.
It follows from our construction that we may assume that $s$
has no critical points in the interiors of the edges of $G$.
Also, we may assume that local exrema of $s$ are vertices
of degree $1$ or $2$.
%Note that if $M \cong S^3$, then $G$ is a tree and
%we can choose $s$ so that it does not have 
%critical points in the interiors of the edges of $G$.
%More generally, if $G'$ is a subgraph of $G$ that corresponds to
%a 
Choose a collection of small balls $\{ B_l \}$ in $G$,
each $B_l$ centered at a vertex of $G$.
%or a critical point of $s$.
By general position we
may assume that $g(B_{l_1})$ and $g(B_{l_2})$
are disjoint for $l_1 \neq l_2$.

On $M \setminus P ^{-1} (\bigcup B_l) $
we define function $f(x) = s \circ P (x)$.
Clearly, connected components of
$f^{-1}(x)$  for $x \in M \setminus F ^{-1} (\bigcup B_l) $ will satisfy the desired bounds
on the area, diameter and genus.

%For a critical point $p$ in the interior of an edge
%of $G$ and the interval $B_l=(p -\varepsilon, p+ \varepsilon)$ around $p$
%we have that $P^{-1}(B_l)$ is a small 
%neighborhood of surface $P^{-1}(p)$ in $M$
%smoothly foliated by $\{P^{-1}(t)\}_{t \in B_l}$.
%We may assume that $f(x) = f(y)$ for
%$x, y \in F^{-1}(\partial B_l)$.
%By creating a small neck and then opening it 
%we obtain a Morse foliation of $P^{-1}(B_l)$ 
%starting from $P^{-1}(p-\varepsilon) \cup P^{-1}(p+\varepsilon)$.
%The genus of surfaces in the foliation will be bounded by
%$2\genus(P^{-1}(p))+1 \leq 2g+1 \leq 5$ and the area by
%$2 \Area(P^{-1}(p))+ \varepsilon' \leq \frac{102 \pi}{\Lambda_0}$, where
%$\varepsilon'$ can be made arbitrarily small
%by choosing sufficiently small $\varepsilon$.

For a ball $B_l$ around a vertex of degree $1$ we have that $P^{-1}(t)$, $t\in B_l$,
is a parametrization of smooth two-sheeted convergence of spheres to a minimal $\RP^2$.
By creating a small neck and then opening it 
we obtain a Morse foliation of $P^{-1}(B_l)$.

For a ball $B_l$ around a 
vertex $v$ of degree $2$ or $3$
we proceed as follows. Let $s_1$
denote the boundary point of $\partial B_l$
that lies on an edge directed towards $v$
and $s_2$ denote the set of one or two points
that lie on the edge (or edges) directed away 
from $v$.
We define a Morse foliation of the region between
$P^{-1}(s_1)$ and $P^{-1}(s_2)$
using \cite[Lemma 4.2]{CL2019}.
Recall that $P^{-1}(v)$ is a union of two connected smooth 
surfaces $S_1$ and $S_2$, $S^2$ diffeomorphic to a disc, with $\partial S_2 \subset S_1$.
$P^{-1}(s_2)$ is diffeomorphic to a surface 
obtained from $S_1$ by removing the tubular neighborhood
of $\partial S_2$ and gluing in two copies of $S_2$.
We can fix a Morse function $h:S_2 \rightarrow [0,1]$
with $h^{-1}(0) = \partial S_2$ and $h^{-1}(1) = \{ p\} $ a point in $S^2$,
such that $h$ has exactly one critical point of index $2$ (at $p$).
%and $n_e + 2 \genus(S_2)$ critical points of index $1$,
%where $n_e$ is the number of boundary components of $S_2$.
Using Morse function $h$ in the construction 
of foliation from \cite[Lemma 4.2]{CL2019}
we obtain that the genus of surfaces in the foliation
is bounded by $genus(S)+1\leq 3$ and the area is at most $\Area(S_1) + 2 \Area(S^2)$. The largest upper bounds for area will correspond to degree
$3$ vertices that we constructed in the proof Theorem \ref{thmaux} with
$\Area(S_1) + 2 \Area(S^2)< \frac{96 \pi}{\Lambda_0}$ for orientable 3-manifolds and $<\frac{112 \pi}{\Lambda_0}$ for non-orientable 3-manifolds.

This defines the desired Morse function $f: M \rightarrow \R$.
\end{proof}

\bibliography{bib} 
\bibliographystyle{amsalpha}

\end{document}